\newcolumntype{d}[1]{D{.}{.}{#1}}
\newtheorem{definition}{Definition}[section]
\newtheorem{theorem}{Theorem}[section]
\newtheorem{lemma}{Lemma}[section]
\newtheorem{remark}{Remark}[section]
\begin{document}


\pagenumbering{arabic}
\baselineskip=1.3pc

\vspace*{0.5in}

\begin{center}

{\Large{\bf Structure-preserving nodal DG method for the Euler equations with gravity: well-balanced, entropy stable, and positivity preserving}}

\end{center}

\vspace{.03in}

\centerline{
Yuchang Liu\footnote{School of Mathematical Sciences,
         University of Science and Technology of China,
         Hefei, Anhui 230026, P.R. China.  
         E-mail: lissandra@mail.ustc.edu.cn.},
Wei Guo\footnote{Department of Mathematics and Statistics, Texas Tech University, Lubbock, TX, 70409, USA. 
E-mail: weimath.guo@ttu.edu.
Research supported by Air Force Office of Scientific Research FA9550-18-1-0257.
},
Yan Jiang\footnote{School of Mathematical Sciences,
         University of Science and Technology of China, Hefei,
         Anhui 230026, P.R. China.  
         E-mail: jiangy@ustc.edu.cn.
         Research supported by NSFC grant 12271499. },
and Mengping Zhang\footnote{School of Mathematical Sciences,
         University of Science and Technology of China, Hefei,
         Anhui 230026, P.R. China.  
         E-mail: mpzhang@ustc.edu.cn.}
}

\vspace{.1in}

\noindent
{\bf Abstract: }We propose an entropy stable and positivity preserving discontinuous Galerkin (DG) scheme for the Euler equations with gravity, which is also well-balanced for hydrostatic equilibrium states. To achieve these properties, we utilize the nodal DG framework and carefully design the source term discretization using entropy conservative fluxes. Furthermore, we demonstrate that the proposed methodology is compatible with a positivity preserving scaling limiter, ensuring positivity of density and pressure under an appropriate CFL condition. To the best of our knowledge, this is the first DG scheme to simultaneously achieve these three properties with theoretical justification. Numerical examples further demonstrate its robustness and efficiency.

\vspace{.1in}

\noindent
\textbf{Key Words:} balance laws,
discontinuous Galerkin method, 
 well-balanced, entropy stability, positivity-preserving.

\section{Introduction}

In this paper, we focus on the simulation of the Euler equations with gravity, which have broad applications in astrophysics and atmospheric science. The development of high-order methods has enabled the computation of high-resolution solutions of partial differential equations (PDEs) with fewer meshes. 
Among them, the discontinuous Galerkin (DG) method, introduced by Reed and Hill in 1973 \cite{reed1973triangular} and later refined and extended by Cockburn and Shu in a series of seminal works \cite{cockburn1989tvb, cockburn1989tvb3, cockburn1990runge2, cockburn1991runge}, offers several distinctive advantages, such as local conservation, high-order accuracy, flexibility in handling complex geometries and general boundary conditions, as well as ease of adaptation and parallel implementation. While DG discretization is an excellent tool for solving the Euler equations with gravity,
such a system possesses fundamental physical structures that must be carefully preserved to ensure reliable and robust simulations. 
Over the past few decades, designing structure-preserving schemes, which maintain key physical properties in the discrete sense, has become a research focus and posed a significant challenge \cite{slotnick2014cfd}. 
This paper aims to develop a provably structure-preserving DG scheme for the Euler equations with gravity.

Mathematically, the Euler equations with gravity form a nonlinear balance law.
A balance law admits a special class of solutions known as \textit{equilibrium solutions} or \textit{steady-state solutions}. At the PDE level, for these solutions, the flux term is exactly balanced by the source term, ensuring that the equilibrium state remains unchanged as time progresses. In the algorithm design, the goal is to ensure that the scheme preserves this property at the discrete level, capturing the ``numerical equilibrium solutions".
Such methods, known as well-balanced (WB) schemes, offer several desired advantages. For example, even on relatively coarse meshes, WB schemes can effectively capture small perturbations added to the equilibrium state, whereas non-WB methods often fail to do so, leading to significant errors or computational instability. Many successful WB schemes have been proposed in the literature, most of which are for the shallow water equations over non-flat bottom topographies -- another important class of hyperbolic balance laws, see, e.g., \cite{xing2010positivity, xing2011advantage,bollermann2013well, michel2016well, del2024well}. WB schemes for the Euler equations with gravity include, e.g., \cite{chandrashekar2015second, grosheintz2019high, xu2024high, arun2024energy}. In particular, under the DG framework, the primary technique for ensuring the WB property is to carefully modify the discretization of the source term in such a way that the flux term and source term are exactly balanced in a discrete sense. In \cite{li2016well}, Li and Xing proposed a WB DG method for isothermal hydrostatic balance. In \cite{chandrashekar2017well}, Chandrashekar and Zenk developed a nodal DG scheme that is WB for both isothermal and isentropic hydrostatic balances, utilizing an interpolation property of the numerical solution. In \cite{wu2021uniformly}, by using the properties of the HLLC flux, Wu and Xing developed a WB DG method for arbitrary hydrostatic equilibrium. More recently, in \cite{du2024well}, Du, Yang, and Zhu proposed a DG method that is also WB for arbitrary hydrostatic equilibrium, utilizing the Lax-Friedrichs flux with an adjusted dissipation coefficient.

Moreover, for hyperbolic conservation laws, the entropy condition is crucial for well-posedness analysis, ensuring that the total entropy does not increase over time, in accordance with the second law of thermodynamics. Hence, it is desirable for the numerical solution to satisfy a discrete form of the entropy condition. Schemes that achieve this property are referred to as entropy-stable \textit{entropy stable} (ES) schemes. In recent years, the development of ES methods has attracted considerable research interest. 
Regarding the DG discretization, there exist two primary approaches for achieving entropy stability. The first approach \cite{chen2017entropy, liu2018entropy, liu2025globally} relies on the construction of summation-by-parts (SBP) operators and the application of the flux differencing technique. The second approach \cite{abgrall2018general, gaburro2023high, liu2024non} attempts to directly control entropy production by incorporating an artificial term into the DG formulation. 
A simple yet important observation made in \cite{desveaux2016well} is that the Euler equations with gravity and those without a source term share the same entropy pair and entropy condition, which is useful for developing a scheme that is both WB and ES.  In \cite{waruszewski2022entropy}, Waruszewski et al. devised an ES DG scheme for the rotating Euler equation with gravity utilizing the first approach. However, their scheme does not account for the WB property and hence cannot capture isothermal equilibrium. In this work, we use the first approach as a foundation to develop a DG scheme for the Euler equations with gravity that preserves both ES and WB properties.

Furthermore, for the Euler equations, the density and pressure of the solution must remain positive to ensure well-posedness and prevent instability or simulation failure \cite{zhang2010positivity}. A numerical method that ensures positivity is known as a positivity-preserving (PP) scheme. In the pioneering work \cite{zhang2010positivity}, Zhang and Shu proposed a simple high-order scaling-based PP DG method to solve the Euler equations.
In \cite{zhang2011positivity}, the technique is extended to the Euler equations with source terms. In the aforementioned works \cite{wu2021uniformly, du2024well}, a similar approach is also applied to achieve the PP property for the Euler equations with gravity.

To our best knowledge, no DG scheme exists that is simultaneously WB, ES, and PP. In this work, we aim to develop a high-order structure-preserving nodal DG scheme that satisfies all these properties. Our contributions encompass the following aspects. Under the ES nodal DG framework, we propose a novel approach to modify the discretization of the source term to precisely match the form of entropy conservative fluxes, thereby ensuring exact balance for the numerical equilibrium state solution while preserving the ES property. Furthermore, we prove that the scheme is WB for arbitrary hydrostatic equilibrium states and both ES and PP for general states. A comprehensive suite of numerical experiments demonstrates the crucial role of each preserved property in our formulation. The numerical evidence indicates that omitting any single property leads to either computational breakdown or violations of critical physical laws in specific test configurations, highlighting the effectiveness of the proposed structure-preserving methodology.

The remainder of this paper is organized as follows. In Section \ref{sec2}, we introduce the Euler equations with gravity, together with their equilibrium solutions and entropy condition. Section \ref{sec3} presents our structure-preserving nodal DG scheme for the one-dimensional (1D) case and its theoretical analysis.  In Section \ref{sec4}, we extend this method to two dimensions (2D). In Section \ref{sec5}, various numerical examples are provided to demonstrate the performance of our proposed scheme. Section \ref{sec6} concludes with a discussion of our results and potential directions for future work.

\section{Euler Equations with Gravity}\label{sec2}

\subsection{Governing Equations}

In the general $d$-dimensional case ($d=1,2,3$), the Euler equations with gravity can be expressed as the following nonlinear system of balance laws:
\begin{equation}\label{eq:Euler-d}
\mathbf{U}_t + \nabla \cdot \mathbf{F}\left( \mathbf{U} \right) = \mathbf{S}\left( \mathbf{U}, \mathbf{x} \right), \quad (\mathbf{x}, t) \in \mathbb{R}^d \times [0, +\infty),
\end{equation}
where
\begin{equation}
\mathbf{U} = \left[ \begin{array}{c}
    \rho \\
    \rho \mathbf{u} \\
    \mathcal{E} \\
\end{array} \right], \quad \mathbf{F}\left( \mathbf{U} \right) = \left[ \begin{array}{c}
    \rho \mathbf{u} \\
    \rho \mathbf{u} \otimes \mathbf{u} + p I_d \\
    \mathbf{u} \left( \mathcal{E} + p \right) \\
\end{array} \right], \quad \mathbf{S}\left( \mathbf{U}, \mathbf{x} \right) = \left[ \begin{array}{c}
    0 \\
    -\rho \nabla \phi \\
    -\rho \mathbf{u} \cdot \nabla \phi \\
\end{array} \right].
\end{equation}
Here, $\rho$ denotes the density, $\rho \mathbf{u}$ is the momentum, $\mathcal{E}$ represents the total energy, $I_d$ is the identity matrix, and $p$ is the pressure. The source term $\mathbf{S}$ models the gravitational force, where $\phi = \phi(\mathbf{x})$ denotes the gravitational potential and is time-independent. To close the system, the ideal gas equation of state is used:
\begin{equation}\label{eq:EOS}
\mathcal{E} = \frac{p}{\gamma - 1} + \frac{1}{2} \rho \left\|\mathbf{u}\right\|^2, \quad 
\text{with}\,\, \gamma = 1.4.
\end{equation}
 The density and pressure in the solution of \eqref{eq:Euler-d} must remain positive for the well-posedness. That is we require 
$\mathbf{U}(\mathbf{x}, t) \in \mathscr{G}, 
 \forall (\mathbf{x}, t) \in \mathbb{R}^d \times [0, +\infty),$ where  $\mathscr{G}$ denotes the \textit{admissible set}
\begin{equation}\label{eq:admissible}
\mathscr{G} = \left\{ (\rho, \rho \mathbf{u}, \mathcal{E}) : \ \rho > 0 \quad \text{and} \quad p(\rho, \rho \mathbf{u}, \mathcal{E}) > 0 \right\}.
\end{equation}
It can be verified that the set $\mathscr{G}$ is convex when $\rho > 0$.

\subsection{Steady-State Solutions}

With the gravitational potential $\phi$, the system \eqref{eq:Euler-d} admits a set of time-independent solutions, called {\em equilibrium state solutions}, which satisfy
$$ \nabla\cdot\mathbf F(\mathbf U)=\mathbf S(\mathbf U,\mathbf x). $$
For these solutions, the flux term is exactly balanced with the source term. In particular, in this work, we focus on the equilibrium state solutions with zero velocity, characterized by
\begin{equation}\label{eq:eqbm}
\rho = \rho(\mathbf{x}), \quad \mathbf{u} = \mathbf{0}, \quad \nabla p = -\rho \nabla \phi,
\end{equation}
referred to as the \textit{hydrostatic equilibrium state} or \textit{mechanical equilibrium state}. There are two important families of special steady-state solutions for \eqref{eq:eqbm}. 
The first type is called the \textit{isothermal hydrostatic equilibrium state}, in which the gas is assumed to obey the relation
$$ p = \rho R\, T $$
with a constant temperature $T$. Here, $R$ is the gas constant. Under this assumption, the solution of \eqref{eq:eqbm} is given by
\begin{equation}\label{eq:isT}
\rho = \rho_0 \exp \left( -\frac{\phi}{R\,T_0} \right), \quad \mathbf{u} = \mathbf{0}, \quad p = p_0 \exp \left( -\frac{\phi}{R\,T_0} \right),
\end{equation}
where $\rho_0, p_0, T_0$ are positive constants satisfying $p_0 = \rho_0 R T_0$. 
The second type is the \textit{isentropic hydrostatic equilibrium state}, which assumes the entropy of the gas remains constant, i.e.,
$$ p \rho^{-\gamma} = K_0. $$
Under this assumption, the solution of \eqref{eq:eqbm} is given by
\begin{equation}\label{eq:isE}
\rho = \left( \frac{\gamma - 1}{K_0 \gamma} \left( C - \phi \right) \right)^{\frac{1}{\gamma - 1}}, \quad \mathbf{u} = \mathbf{0}, \quad p = K_0 \rho^\gamma,
\end{equation}
where $C$ and $K_0$ are constants.

\subsection{Entropy Analysis}

Consider a $d$-dimensional hyperbolic conservation law
\begin{equation}\label{eq:hcl}
\mathbf{U}_t + \nabla \cdot \mathbf{F}(\mathbf{U}) = \mathbf{0}, \qquad 
\mathbf{F} = (\mathbf{F}_1, \dots, \mathbf{F}_d),
\end{equation}
and suppose the state $\mathbf{U}$ takes values in a convex set $\mathcal{D}$.
A convex function $\mathcal{U}(\mathbf{U}): \mathcal{D} \rightarrow \mathbb{R}$ is called an entropy function for system \eqref{eq:hcl} if there exists an entropy flux $\mathcal{F}(\mathbf{U}) = (\mathcal{F}_1(\mathbf{U}), \dots, \mathcal{F}_d(\mathbf{U})): \mathcal{D} \rightarrow \mathbb{R}^d$ such that
\begin{equation}\label{eq:entropy}
\mathcal{F}_i'(\mathbf{U}) = \mathcal{U}'(\mathbf{U}) \,\mathbf{F}_i'(\mathbf{U}), \quad i = 1, \dots, d.
\end{equation}
Here, $\mathcal F_i'(\mathbf U)$ and $\mathcal U'(\mathbf U)$ are viewed as row vectors. 
The entropy function $\mathcal{U}$ and the entropy flux  $\mathcal{F}$ together form an \textit{entropy pair} $(\mathcal{U}, \mathcal{F})$. 
$\mathbf{V} = \mathcal{U}'(\mathbf{U})^T$ is called the \textit{entropy variable}. If a system of conservation laws \eqref{eq:hcl} admits an entropy pair, 
then left-multiplying by $\mathbf{V}(\mathbf{U})^T$ yields an additional entropy conservation law for smooth solutions
\begin{equation}\label{eq:EC}
\mathcal{U}(\mathbf{U})_t + \nabla \cdot \mathcal{F}(\mathbf{U}) = 0.
\end{equation}
For non-smooth solutions, the above equation is replaced by an inequality in the weak sense
\begin{equation}\label{eq:ES}
\mathcal{U}(\mathbf{U})_t + \nabla \cdot \mathcal{F}(\mathbf{U}) \le 0,
\end{equation}
 known as the \textit{entropy condition}. 
In particular, a strictly convex function $\mathcal{U}$ serves as an entropy function if and only if $\partial \mathbf{U} / \partial \mathbf{V}$ is a symmetric, positive-definite matrix and $\partial \mathbf{F}_i(\mathbf{U}(\mathbf{V})) / \partial \mathbf{V}$ is symmetric for all $i$ \cite{godunov1961interesting, godlewski2013numerical}. In this case, there exist twice-differentiable scalar functions $\varphi(\mathbf{V})$ and $\psi_i(\mathbf{V})$, called the \textit{potential function} and \textit{potential flux}, with $\varphi(\mathbf{V})$ strictly convex, such that
\begin{equation}\label{eq:psi}
\mathbf{U}(\mathbf{V})^T = \frac{\partial \varphi}{\partial \mathbf{V}}, \quad \mathbf{F}_i(\mathbf{V})^T = \frac{\partial \psi_i}{\partial \mathbf{V}}.
\end{equation}

For the Euler equations without source term $\mathbf{S}(\mathbf{U}, \mathbf{x})$, Harten \cite{HARTEN1983151} showed that there exists a family of entropy pairs related to the physical specific entropy $s = \ln(p \rho^{-\gamma})$, satisfying the symmetrization condition. 
However, to symmetrize the viscous term in the compressible Navier-Stokes equations with heat conduction \cite{HUGHES1986223}, there is only one choice of the entropy pair
\begin{equation} \label{eq:entropy1}
\mathcal{U} = -\frac{\rho s}{\gamma - 1}, \quad \mathcal{F} = -\frac{\rho s }{\gamma - 1}\mathbf{u}.
\end{equation}
Correspondingly,
$$
\mathbf{V} = \mathcal{U}'(\mathbf{U})^T = \left[ \begin{array}{c}
    \dfrac{\gamma - s}{\gamma - 1} - \dfrac{\rho \|\mathbf{u}\|^2}{2p} \\
    {\rho \mathbf{u}}/{p} \\
    -{\rho}/{p} \\
\end{array} \right],
$$
and $\varphi = \rho, \  \psi_i = \rho u_i$. 
Notice that $$
\mathbf{V}(\mathbf{U})^T \mathbf{S}(\mathbf{U}, \mathbf{x}) = -\rho \nabla \phi \cdot \frac{\rho \mathbf{u}}{p} - \left( -\rho \mathbf{u} \cdot \nabla \phi \right) \frac{\rho}{p} = 0,
$$
and hence, it is straightforward to verify that the Euler equations with gravity \eqref{eq:Euler-d} also satisfy the entropy inequality \eqref{eq:ES} with the entropy pair \eqref{eq:entropy1} \cite{desveaux2016well}. This observation is critical to design a scheme that is ES and WB simultaneously. 

\section{Structure-Preserving Nodal DG Method in One Dimension}\label{sec3}
In this section, we formulate the proposed scheme for the 1D case of \eqref{eq:Euler-d}, which is given as 
\begin{equation}\label{eq:Euler1D}
\left[ \begin{array}{c}
    \rho \\
    m \\
    \mathcal{E} \\
\end{array} \right]_t + \left[ \begin{array}{c}
    m \\
    \rho u^2 + p \\
    u (\mathcal{E} + p) \\
\end{array} \right]_x = \left[ \begin{array}{c}
    0 \\
    -\rho \phi_x \\
    -m \phi_x \\
\end{array} \right],
\end{equation}
where $m = \rho u$ is the momentum. Denote such a system by
$$ \mathbf{U}_t + \mathbf{F}(\mathbf{U})_x = \mathbf{S}(\mathbf{U}, x)$$
with $\mathbf{F}=(F_1,F_2,F_3)^T$.

Assume that the 1D spatial domain $\Omega=[a,b]$ is divided into $N$ uniform cells $\mathcal K=\{K_{i}=[x_{i-1/2},x_{i+1/2}], i=1, \cdots, N\}$ with $x_{1/2}=a$, $x_{N+1/2}=b$ and mesh size $\Delta x=x_{i+1/2} - x_{i+1/2}$. 
Define the finite element space as the piecewise polynomial space of degree at most $k$:
$$
V_h^k = \left\{ w(x): w(x) |_{K_i} \in P^k(K_i), \quad \forall K_i \in \mathcal{K} \right\}.
$$
The classic semi-discrete DG scheme for \eqref{eq:Euler1D} is: Find $\mathbf{U}_h \in [V_h^k]^3$, such that for any test function $\mathbf{W} \in [V_h^k]^3$ and cell $K_i \in \mathcal{K}$,
\begin{equation}\label{eq:modal_DG}
   \int_{K_i} \frac{\partial \mathbf{U}_h}{\partial t} \cdot \mathbf{W} \, \mathrm{d}x = \int_{K_i} \mathbf{F}(\mathbf{U}_h) \cdot \frac{\partial \mathbf{W}}{\partial x} \, \mathrm{d}x - \hat{\mathbf{F}}_{i+1/2} \mathbf{W}^-_{i+1/2} + \hat{\mathbf{F}}_{i-1/2} \mathbf{W}^+_{i-1/2} + \int_{K_i} \mathbf{S} \cdot \mathbf{W} \, \mathrm{d}x.
\end{equation}
Here, $\hat{\mathbf{F}}_{i+1/2}$ represents the numerical flux at the cell interface $x_{i+1/2}$. In this work, we employ the Lax-Friedrichs flux:
\begin{equation}\label{eq:LF}
\hat{\mathbf{F}}^{LF}(\mathbf{U}_L, \mathbf{U}_R) = \frac{1}{2} (\mathbf{F}(\mathbf{U}_R) + \mathbf{F}(\mathbf{U}_L)) - \frac{\alpha}{2} (\mathbf{U}_R - \mathbf{U}_L),
\end{equation}
where $\alpha$ is an estimate of the maximum local wave speed and will be determined below. 

\subsection{Gauss-Lobatto Quadrature and the SBP Property}
We now apply the Gauss–Lobatto quadrature rule with \(k+1\) quadrature points to build the nodal DG scheme. Let 
$$
-1 = X_0 < X_1 < \cdots < X_k = 1,
$$
denote the Gauss–Lobatto quadrature points on the reference element \(I = [-1, 1]\), with corresponding quadrature weights \(\{\omega_l\}_{l=0}^{k}\). 
The difference matrix \(D\) is defined as
$
D_{jl} = L_l'(X_j),
$
where $L_l$ is the $l$-th Lagrange basis polynomial satisfying $L_l(X_j)=\delta_{lj}$. 
The mass matrix \(M\) and stiffness matrix \(S\) are given by $M = \text{diag}\{\omega_0, \omega_1, \dots, \omega_k\}$ and $S= MD$.
We recall the following properties of these matrices \cite{chen2017entropy}:

\begin{lemma}\label{lem:SBP}
Let the boundary matrix \(B\) be defined as
$$
B = \mathrm{diag}\{-1, 0, 0, \dots, 0, 0, 1\} =: \mathrm{diag}\{\tau_0, \dots, \tau_k\},
$$
then $S + S^T = B.$
\end{lemma}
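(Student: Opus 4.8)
The plan is to interpret the matrix entries of $S$ as exact Gauss--Lobatto quadratures of products of Lagrange basis polynomials and their derivatives, and then invoke a single integration by parts. First I would write out the $(j,l)$ entry explicitly: since $S = MD$ with $M$ diagonal, we have $S_{jl} = \omega_j D_{jl} = \omega_j L_l'(X_j)$, so that $(S+S^T)_{jl} = \omega_j L_l'(X_j) + \omega_l L_j'(X_l)$. The goal then reduces to showing that this expression equals $B_{jl}$, i.e.\ that it vanishes except at the two corner entries $(0,0)$ and $(k,k)$, where it takes the values $-1$ and $+1$ respectively.

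The central observation is that the $(k+1)$-point Gauss--Lobatto rule is exact for all polynomials of degree at most $2k-1$. Since each $L_l$ has degree $k$, the product $L_l' L_j$ has degree $(k-1)+k = 2k-1$, which lies exactly at the top of the exactness range. Hence the quadrature reproduces the integral without error:
\[
\int_{-1}^{1} L_l'(x) L_j(x)\,\mathrm{d}x = \sum_{m=0}^{k} \omega_m L_l'(X_m) L_j(X_m) = \omega_j L_l'(X_j) = S_{jl},
\]
where I have used the nodal property $L_j(X_m)=\delta_{jm}$. Swapping the roles of $j$ and $l$ gives $\int_{-1}^{1} L_j'(x) L_l(x)\,\mathrm{d}x = S_{lj} = (S^T)_{jl}$.

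It then remains to sum these two integrals and apply integration by parts:
\[
(S + S^T)_{jl} = \int_{-1}^{1} \bigl(L_l'(x) L_j(x) + L_j'(x) L_l(x)\bigr)\,\mathrm{d}x = \int_{-1}^{1} \bigl(L_l L_j\bigr)'(x)\,\mathrm{d}x = L_l(1)L_j(1) - L_l(-1)L_j(-1).
\]
Using $X_0 = -1$, $X_k = 1$ and again the nodal property, the boundary contribution becomes $\delta_{lk}\delta_{jk} - \delta_{l0}\delta_{j0}$, which is precisely $B_{jl}$, thereby establishing $S + S^T = B$.

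The step I expect to require the most care is justifying the exactness of the quadrature at degree $2k-1$: the integrand $L_l' L_j$ sits right at the boundary of the range where Gauss--Lobatto is exact, so it is essential that the rule integrates degree-$2k-1$ polynomials without error rather than only degree $2k-2$. This sharpness is a standard feature of the Gauss--Lobatto rule (the two endpoints are fixed, while the $k-1$ interior nodes are chosen optimally, yielding degree of exactness $2k-1$), but it is the only nontrivial ingredient; everything else is bookkeeping with the nodal interpolation identity and one integration by parts.
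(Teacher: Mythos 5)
Your proof is correct and is the standard argument for the summation-by-parts property: the paper itself does not prove Lemma \ref{lem:SBP} but recalls it from \cite{chen2017entropy}, where exactly this reasoning (exactness of the $(k+1)$-point Gauss--Lobatto rule for degree $2k-1$, collapse of the quadrature sum via $L_j(X_m)=\delta_{jm}$, and one integration by parts) is used. All steps check out, including the sharpness of the degree-of-exactness claim on which the identity $S_{jl}=\int_{-1}^{1}L_l'L_j\,\mathrm{d}x$ depends.
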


\begin{lemma}
For each \(0 \le j \le k\), we have
$$
\sum_{l=0}^{k} D_{jl} = \sum_{l=0}^{k} S_{jl} = 0, \quad \sum_{l=0}^{k} S_{lj} = \tau_j.
$$
\end{lemma}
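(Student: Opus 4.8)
The plan is to reduce everything to the single algebraic fact that the Lagrange basis forms a partition of unity. Since the polynomials $\{L_l\}_{l=0}^k$ interpolate any polynomial of degree at most $k$ exactly and the constant function $1$ is such a polynomial, we have $\sum_{l=0}^k L_l(x)\equiv 1$ on $I$. Differentiating this identity gives $\sum_{l=0}^k L_l'(x)\equiv 0$, and evaluating at the node $X_j$ yields $\sum_{l=0}^k D_{jl}=\sum_{l=0}^k L_l'(X_j)=0$. This settles the first equality with essentially no computation.

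The second row-sum identity follows by transferring the previous result through the mass matrix. Because $M=\mathrm{diag}\{\omega_0,\dots,\omega_k\}$ is diagonal, the entries of $S=MD$ satisfy $S_{jl}=\omega_j D_{jl}$, so that $\sum_{l=0}^k S_{jl}=\omega_j\sum_{l=0}^k D_{jl}=0$. The only point to keep straight here is the index bookkeeping: the diagonal structure of $M$ means the left multiplication simply rescales the $j$-th row of $D$ by $\omega_j$, which is positive and therefore does not affect the vanishing of the sum.

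For the column-sum identity I would invoke the SBP property established in Lemma \ref{lem:SBP}, namely $S+S^T=B$. Reading off the $(l,j)$ entry gives $S_{lj}+S_{jl}=B_{lj}$, and summing over $l$ produces
\begin{equation}
\sum_{l=0}^k S_{lj}+\sum_{l=0}^k S_{jl}=\sum_{l=0}^k B_{lj}.
\end{equation}
Since $B=\mathrm{diag}\{\tau_0,\dots,\tau_k\}$ is diagonal, the right-hand side collapses to the single surviving term $B_{jj}=\tau_j$; combining this with the already-proven fact $\sum_{l=0}^k S_{jl}=0$ gives $\sum_{l=0}^k S_{lj}=\tau_j$, completing the proof.

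I do not anticipate a genuine obstacle, as each step is a one-line manipulation. If there is any subtle point, it is recognizing at the outset that the partition-of-unity property is what drives the two vanishing sums; once that is in hand, the diagonal structure of $M$ and the SBP identity of Lemma \ref{lem:SBP} do the rest mechanically.
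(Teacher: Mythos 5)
Your proof is correct: the partition-of-unity argument for $\sum_l D_{jl}=0$, the rescaling by the diagonal mass matrix for $\sum_l S_{jl}=0$, and the use of $S+S^T=B$ for the column sums are all valid, and this is the standard derivation. The paper itself does not prove this lemma --- it simply recalls it from the cited reference \cite{chen2017entropy} --- so there is no alternative argument to compare against; your reconstruction fills that gap correctly.
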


Using the matrices defined above, we can construct the original nodal DG scheme in a compact matrix-vector formulation based on the nodal values. First, we introduce the following shorthand notation:
\begin{align}
 x_i(X) = x_{i} + \frac{\Delta x}{2} X,\quad
 \mathbf{U}_{i_1}^{i} = \mathbf{U}_h(x_i(X_{i_1})), \quad \mathbf{F}_{i_1}^{i} = \mathbf{F}(\mathbf{U}_{i_1}^{i}),
\end{align}
\begin{equation}
\label{eq:Fstar}
\mathbf{F}_{i_1}^{*,i} = \begin{cases}
\hat{\mathbf{F}}_{i-1/2}=:\hat{\mathbf F}(\mathbf U_k^{i-1},\mathbf U_0^{i}), & i_1 = 0, \\
0, & 0 < i_1 < k, \\
\hat{\mathbf{F}}_{i+1/2}=:\hat{\mathbf F}(\mathbf U_k^i,\mathbf U_0^{i+1}), & i_1 = k.
\end{cases}
\end{equation}
Then the solution is represented by
\begin{align}
\mathbf{U}_h(x)|_{K_i} = \sum_{i_i=0}^{k} \mathbf{U}_{i_1}^{i} L_{i_1} \left( \frac{x-x_i}{\Delta x/2} \right).
\end{align}
Then, after applying Gauss–Lobatto quadrature and the SBP properties in Lemma \ref{lem:SBP}, \eqref{eq:modal_DG} transforms into a nodal DG scheme, with the strong formulation given by
\begin{equation}\label{eq:nodal-old}
\frac{\Delta x}{2}\frac{\mathrm{d}\mathbf{U}_{i_1}^{i}}{\mathrm{d}t} + \sum_{l=0}^k D_{i_1, l} \mathbf{F}_l^i + \frac{\tau_{i_1}}{\omega_{i_1}} \left( \mathbf{F}_{i_1}^{*,i} - \mathbf{F}_{i_1}^{i} \right) =\frac{\Delta x}{2} \mathbf{S} \left( \mathbf{U}_{i_1}^{i}, x_i(X_{i_1}) \right)
\end{equation}
for $i=1,\ldots,N$, $i_1=0,\ldots,k$.
Meanwhile, it is known that the nodal DG scheme above may fail to preserve physical structures of interest. In this work,  we systematically modify this formulation to ensure that the resulting scheme simultaneously preserves the following properties:

\begin{itemize}
\item[1.] \textbf{Well-balanced (WB) property}: 
If the initial condition is a steady-state solution $\mathbf{U}^e(x) = (\rho^e(x), 0, \mathcal{E}^e(x))^T$, the numerical solution remains \(\mathbf{U}_h = \mathbf{U}_h^e\), where $\mathbf{U}_h^e$ denotes the interpolating polynomial of $\mathbf{U}^e$ at Gauss-Lobatto points.

\item[2.] \textbf{Entropy-stable (ES) property}: 
Under the assumption of periodic, compactly supported, or reflective boundary conditions, the semi-discrete scheme satisfies
$$
\frac{\mathrm{d} }{\mathrm{d}t} \left( \sum_{i=1}^N \sum_{i_1=0}^k \frac{\Delta x}{2} \omega_{i_1} 
 \mathcal{U}(\mathbf{U}_{i_1}^{i}) \right)  \le 0.
$$

\item[3.] \textbf{Positivity-preserving (PP) property}: if the numerical solution at time $t^n$ satisfies \(\mathbf{U}_{i_1}^{i, n} \in \mathscr{G}\), then the updated solution at \(t^{n+1}\) satisfies
$$
\mathbf{U}_{i_1}^{i, n+1} \in \mathscr{G}\quad 
\text{for any \(i\) and \(i_1\)}.
$$
\end{itemize}

\subsection{Proposed scheme}\label{sec:scheme}

For a given steady state $\mathbf U^e$ that satisfies \eqref{eq:eqbm}, we have the following relations:
$$
-\rho \phi _x = -\frac{\rho}{\rho ^e} \rho ^e \phi _x = \frac{\rho}{\rho ^e} p_{x}^{e}, \quad -m \phi _x = -\frac{m}{\rho ^e} \rho ^e \phi _x = \frac{m}{\rho ^e} p_{x}^{e}.
$$
Hence, following the approach in \cite{li2016well, wu2021uniformly, du2024well}, we rewrite the momentum and energy equations as
\begin{equation}
     \frac{\partial m}{\partial t} + \frac{\partial \left( \rho u^2 + p \right)}{\partial x} = \frac{\rho}{\rho ^e} \frac{\partial p^e}{\partial x},  \quad 
\frac{\partial \mathcal{E}}{\partial t} + \frac{\partial \left( u \left( \mathcal{E} + p \right) \right)}{\partial x} = \frac{m}{\rho ^e} \frac{\partial p^e}{\partial x},
\end{equation}
which facilitates the design of  discretization for the source term that precisely matches the flux term in the equilibrium state solution.
By utilizing the entropy conservative fluxes \cite{chen2017entropy} and the fact that $p^e = F_2(\mathbf U^e)$, the proposed nodal DG scheme is given by 
\begin{equation}\label{eq:nodalES}
\frac{\Delta x}{2}\frac{\mathrm d\mathbf U_{i_1}^i}{\mathrm dt}+\sum\limits_{l=0}^k2D_{i_1,l}\mathbf F^S(\mathbf U_{i_1}^i,\mathbf U_{l}^i)+\frac{\tau _{i_1}}{\omega_{i_1}}(\mathbf F_{i_1}^{*,i}-\mathbf F_{i_1}^i)=\mathbf S_{i_1}^i,
\end{equation}
where $\mathbf F_{i_1}^{*,i}$ is given in \eqref{eq:Fstar}, and 
\begin{equation}\label{eq:wbsource}
\mathbf S_{i_1}^i=\left( 0,\ \rho_{i_1}^i\Theta_{i_1}^i,\ m_{i_1}^i\Theta_{i_1}^i \right)^T
\end{equation}
is a high-order approximation to the source term with 
\begin{equation}
    \Theta_{i_1}^i := \frac{1}{\rho_{i_1}^{e,i}} \sum\limits_{l=0}^k 2D_{i_1,l} F_2^S(\mathbf U_{i_1}^{e,i}, \mathbf U_{l}^{e,i}) = \frac{1}{\rho_{i_1}^{e,i}} (p_x^e)_{i_1}^i\frac{\Delta x}{2} + \mathcal O(\Delta x^{k+1}). 
\end{equation}
In addition, $\mathbf F^S = (F_1^S, F_2^S, F_3^S)^T$ denotes an entropy conservative flux, and $\hat{\mathbf F} = (\hat F_1, \hat F_2, \hat F_3)^T$ used in \eqref{eq:Fstar} denotes an entropy stable flux, each with specific definitions given below.

\begin{definition}
A consistent, symmetric two-point numerical flux $\mathbf F^S(\mathbf U_L, \mathbf U_R)$ is said to be entropy conservative, if for the given entropy function $\mathcal U$,
\begin{equation}\label{eq:ESflux}
\left( \mathbf{V}_R - \mathbf{V}_L \right) ^T \mathbf{F}^S \left( \mathbf{U}_L, \mathbf{U}_R \right) = \left( \psi_R - \psi_L \right).
\end{equation}
\end{definition}

\begin{definition}\label{eq:entropystableflux}
A consistent two-point numerical flux $\hat{\mathbf F}(\mathbf U_L, \mathbf U_R)$ is said to be entropy stable, if for the given entropy function $\mathcal U$,
$$
\left( \mathbf{V}_R - \mathbf{V}_L \right) ^T \hat{\mathbf{F}} \left( \mathbf{U}_L, \mathbf{U}_R \right) \le \left( \psi_R - \psi_L \right).
$$
\end{definition}

For the Euler equations, Chandrashekar \cite{chandrashekar2013kinetic} proposed the following entropy conservative flux:
$$
\begin{aligned}
F_{1}^{S} &= \hat{\rho} \bar{u}, \\
F_{2}^{S} &= \frac{\bar{\rho}}{2\bar{\beta}} + \bar{u} F_{1}^{S}, \\
F_{3}^{S} &= \left( \frac{1}{2(\gamma - 1)\hat\beta} - \frac{1}{2} \overline{u^2} \right) F_1^S + \bar{u} F^2_S,
\end{aligned}
$$
where $\beta = \rho / 2p$, and 
$$
\overline{\alpha} = \frac{\alpha_l+\alpha_r}{2}, \quad \hat{\alpha} = \frac{\alpha_r - \alpha_l}{\ln \alpha_r - \ln \alpha_l}.
$$

For entropy stable fluxes, in \cite{toro2013riemann} Toro suggests the two-rarefaction approximation, and in \cite{guermond2016fast} Guermond and Popov demonstrated that the two-rarefaction approximated wave speeds can achieve entropy stability for the Euler equations with $1 < \gamma \le 5/3$ when using the Lax-Friedrichs flux. Therefore, we set the value of $\alpha$ in the Lax-Friedrichs flux \eqref{eq:LF} as
$$
\alpha = \max \left\{ \left| u_L \right| + c_L, \left| u_R \right| + c_R, \alpha^{RRF}(\mathbf U_L, \mathbf U_R) \right\}
$$
to ensure entropy stability, where $c = \sqrt{\gamma p / \rho}$, and $\alpha^{RRF}$ is determined using the two-rarefaction approximation technique. 

The semi-disctere nodal DG scheme \eqref{eq:nodalES} can be presented as an ODE system
$$
\frac{\mathrm{d} \mathbf{U}_h}{\mathrm{d}t} = \mathcal{L}_h(\mathbf{U}_h).
$$
The fully-discrete scheme is obtained by applying a strong stability preserving Runge-Kutta (SSP-RK) method to solve this ODE system. To further ensure the PP property, we propose to incorporate the well-established PP limiter \cite{zhang2010positivity}.
Since an SSP-RK method is a convex combination of forward Euler steps, we focus on the fully-discrete scheme with forward Euler time discretization, which is given by
\begin{equation}\label{eq:EF0}
\mathbf{U}_h^{EF} = \mathbf{U}_h^n + \Delta t \cdot \mathcal{L}_h(\mathbf{U}_h^n).
\end{equation}
The high order scaling-based PP limiter, denoted by $\Pi_h$, is applied to guarantee the positivity of the density and pressure at each Gauss–Lobatto point,
\begin{equation}\label{eq:EF}
\mathbf{U}^{n+1}_h = \Pi_h \,\mathbf{U}_h^{EF}= \Pi_h \left( \mathbf{U}_h^n + \Delta t \cdot \mathcal{L}_h(\mathbf{U}_h^n) \right).
\end{equation}
In particular, $\Pi_h$ scales the numerical solutions towards the cell averages in $K_i$ using two parameters $\theta_i^{(1)}, \theta_i^{(2)} \in [0, 1]$, as defined by
\begin{equation}\label{pp}
\begin{aligned}
\tilde{\rho}^{i,EF}_{i_1} &= \left( 1 - \theta_i^{(1)} \right) \bar{\rho}^{i,EF} + \theta_i^{(1)} \rho^{i,EF}_{i_1}, \\
\left( \begin{array}{c}
    \rho_{i_1} \\
    m_{i_1} \\
    \mathcal{E}_{i_1}
\end{array} \right)^{i,n+1} &= \left( 1 - \theta_{i}^{(2)} \right) \left( \begin{array}{c}
    \bar{\rho} \\
    \bar{m} \\
    \bar{\mathcal{E}}
\end{array} \right)^{i,EF} + \theta_{i}^{(2)} \left( \begin{array}{c}
    \tilde{\rho}_{i_1} \\
    m_{i_1} \\
    \mathcal{E}_{i_1}
\end{array} \right)^{i,EF}.
\end{aligned}
\end{equation}
Here, 
 $$
 \bar{\mathbf{U}}^{i, EF} 
 = \frac{1}{\Delta x} \int_{K_i} \mathbf{U}_h^{EF}(x) \, \mathrm{d}x 
 = \sum_{i_1=0}^k \frac{\omega_{i_1}}{2} \mathbf{U}_{i_1}^{i, EF}  
 $$
is the cell average of \(\mathbf{U}_h^{EF}\) in \(K_i\).
The value of $\theta$ is computed as
\begin{equation} \label{eq:rhoPP}
\theta_i^{(1)} = \min\left\{ \frac{\bar{\rho}^{i,EF} - \varepsilon}{\bar{\rho}^{i,EF} - \rho_m}, 1 \right\}, \quad 
\rho_m = \min\limits_{0 \le i_1 \le k} \rho^{i,EF}_{i_1},
\end{equation}
\begin{equation}\label{eq:tPP}
\theta_i^{(2)} = \min\limits_{0 \le i_1 \le k}\{ t_{i_1} \}, \quad t_{i_1} = \begin{cases}
    1, & p\left( \widetilde{\mathbf{U}}_{i_1}^{i,EF} \right) \ge \varepsilon, \\
    \tilde{t}_{i_1}, & p\left( \widetilde{\mathbf{U}}_{i_1}^{i,EF} \right) < \varepsilon.
\end{cases}
\end{equation}
Here, the parameter $\varepsilon$ is a small number, and we set $\varepsilon=10^{-13}$ as suggested in \cite{zhang2010positivity}. In \eqref{eq:tPP}, $\widetilde{\mathbf U}_{i_1}^{i,EF}=(\tilde\rho_{i_1}^{i,EF},m_{i_1}^{i,EF},\mathcal E_{i_1}^{i,EF})^T$, and the scaling factor $\tilde t_{i_1}$ is solved from the nonlinear equation
$$ p\left((1-\tilde t_{i_1})\bar{\mathbf U}^{i,n} + \tilde t_{i_1}\widetilde{\mathbf U}_{i_1}^{i,n}\right) = \varepsilon.$$
In particular, if $\mathbf U_{i_1}^{i,EF}\in \mathscr{G}$ for all $i_1$, then $\theta_i^{(1)}=\theta_i^{(2)}=1$. 
It should be noted that $\Pi_h$ does not change the cell average, i.e. $\bar{\mathbf U}^{i,n+1} = \bar{\mathbf U}^{i,EF}$. 
Moreover, if $\bar{\mathbf U}^{i,EF}\in \mathscr{G}$, the limiter \eqref{pp} ensures the nodal value at each Gauss-Lobatto point $\mathbf{U}_{i_1}^{i,n+1} \in \mathscr{G}$.

\subsection{Properties of the scheme}

In this subsection, we prove that the scheme \eqref{eq:nodalES} is WB, ES, and the fully-discrete scheme \eqref{eq:EF} is PP under a suitable CFL condition. 

\subsubsection{Well-balanced property}
\begin{theorem}[Well-balancedness]\label{thm:wb1}
The scheme \eqref{eq:nodalES} is WB for a general known stationary hydrostatic solution
$\mathbf U^e=(\rho^e,0,\mathcal E^e)$, i.e.
\begin{equation}
    \mathcal L_h(\mathbf U_h)=\mathbf 0,\qquad \mathrm{if}\ \ \mathbf U_h=\mathbf U_h^e.
\end{equation}
\end{theorem}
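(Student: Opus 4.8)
The plan is to substitute $\mathbf U_h=\mathbf U_h^e$ directly into the residual operator $\mathcal L_h$ defined by \eqref{eq:nodalES} and verify that each of its three components vanishes at every node. The decisive structural fact I would exploit from the outset is that the Gauss--Lobatto nodes include the two endpoints $X_0=-1$ and $X_k=1$; consequently the interpolant $\mathbf U_h^e$ reproduces the exact steady state at each cell interface, so that $\mathbf U_k^{e,i}=\mathbf U^e(x_{i+1/2})=\mathbf U_0^{e,i+1}$ and the numerical solution is in fact continuous across interfaces. I would also record at the start that the steady state has $u\equiv 0$, hence $m_{i_1}^{e,i}=0$ and every velocity-dependent average entering $\mathbf F^S$ vanishes, i.e. $\bar u=\overline{u^2}=0$ for any pair of nodes.

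I would handle the interface (surface) contributions first, uniformly for all three equations. Since the left and right traces at each interface coincide with $\mathbf U^e(x_{i+1/2})$, the consistency of the Lax--Friedrichs flux \eqref{eq:LF} gives $\hat{\mathbf F}(\mathbf U^e(x_{i+1/2}),\mathbf U^e(x_{i+1/2}))=\mathbf F(\mathbf U^e(x_{i+1/2}))$, so the dissipation term drops out and $\mathbf F^{*,i}_{i_1}=\mathbf F^{i}_{i_1}$ at the two boundary nodes $i_1\in\{0,k\}$ (evaluated at the steady state). Because $\tau_{i_1}=0$ at the interior nodes, the surface term $\frac{\tau_{i_1}}{\omega_{i_1}}(\mathbf F^{*,i}_{i_1}-\mathbf F^{i}_{i_1})$ therefore vanishes at every node, for all three components simultaneously.

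For the density and energy equations it then remains only to treat the volume and source terms. Using $\bar u=\overline{u^2}=0$ in Chandrashekar's flux yields $F_1^S=\hat\rho\,\bar u=0$ and $F_3^S=0$ at the steady state, so the flux-differencing sums $\sum_l 2D_{i_1,l}F_1^S$ and $\sum_l 2D_{i_1,l}F_3^S$ both vanish; the matching source components are $0$ and $m_{i_1}^{e,i}\Theta_{i_1}^i=0$, respectively, so both residuals are zero. The crux of the argument is the momentum equation, where $F_2^S=\bar\rho/(2\bar\beta)$ does not vanish and the volume term $\sum_l 2D_{i_1,l}F_2^S(\mathbf U_{i_1}^{e,i},\mathbf U_l^{e,i})$ is genuinely nontrivial. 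Here I would invoke the very definition of the source discretization: $\rho_{i_1}^{e,i}\Theta_{i_1}^i$ is constructed to equal precisely this sum, so the volume and source terms cancel identically, leaving only the already-vanishing surface term. This last cancellation---engineered through $\Theta_{i_1}^i$ so that the discrete pressure gradient produced by flux-differencing $F_2^S$ is reproduced node-by-node---is the main point of the theorem; everything else follows from flux consistency and the endpoint-inclusive quadrature. Collecting the three components gives $\mathcal L_h(\mathbf U_h^e)=\mathbf 0$.
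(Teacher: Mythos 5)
Your proposal is correct and follows essentially the same route as the paper's proof: continuity of the Gauss--Lobatto interpolant at interfaces plus flux consistency kills the surface terms, zero velocity annihilates the density and energy components, and the momentum residual vanishes because $\rho_{i_1}^{i}\Theta_{i_1}^{i}$ is by construction exactly the flux-differencing sum $\sum_{l}2D_{i_1,l}F_2^S(\mathbf U_{i_1}^{e,i},\mathbf U_l^{e,i})$. The only cosmetic difference is that you dispatch the surface term uniformly for all three components via $\hat{\mathbf F}=\mathbf F$ at coincident traces, whereas the paper argues $F_1=F_1^S=\hat F_1=F_3=F_3^S=\hat F_3=0$ directly for the first and third components and reserves the consistency argument for the momentum equation.
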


\begin{proof}
As $\mathbf U_h=\mathbf U_h^e$, we have $\mathbf U_{i_1}^i=\mathbf U_{i_1}^{e,i}$. It is easy to verify
\begin{equation}\label{eq:wbpf1}
F_1=F_1^S=\hat F_1=0,\quad F_3=F_3^S=\hat F_3=0, 
\end{equation}
thus $\mathcal{L}_{h,1}(\mathbf{U}_h)|_{i_1}^i =  \mathcal{L}_{h,3}(\mathbf{U}_h)|_{i_1}^i = 0$. 
For the equation of momentum, we note that $\mathbf U^e_h$ is continuous at the interfaces $x_{i+1/2}$, since it interpolates the steady state solution $\mathbf U^e$ at Gauss-Lobatto points, which include the interface location.  Utilizing the consistency of $\hat{\mathbf F}$, we have
$$ F_{2,i_1}^i=p^{e,i}_{i_1},\quad \hat F_{2,i+1/2}=p^{e}(x_{i+1/2})=F_{2,k}^i=F_{2,0}^{i+1}. $$
Therefore,
\begin{align*}
\mathcal{L}_{h,2}(\mathbf{U}_h)|_{i_1}^i
&=-\sum_{l=0}^k{2D_{i_1,l}F_{2}^{S}\left( \mathbf{U}_{i_1}^{i},\mathbf{U}_{l}^{i} \right)}-\frac{\tau _{i_1}}{\omega _{i_1}}\left( F_{2,i_1}^{*,i}-F_{2,i_1}^{i} \right) 
\\&\quad\, +\frac{\rho _{i_1}^{i}}{\rho _{i_1}^{e,i}}\sum_{l=0}^k{2D_{i_1,l}F_{2}^{S}\left( \mathbf{U}_{i_1}^{e,i},\mathbf{U}_{l}^{e,i} \right)}
\\
&=-\sum_{l=0}^k{2D_{i_1,l}F_{2}^{S}\left( \mathbf{U}_{i_1}^{e,i},\mathbf{U}_{l}^{e,i} \right)}-0+1\sum_{l=0}^k{2D_{i_1,l}F_{2}^{S}\left( \mathbf{U}_{i_1}^{e,i},\mathbf{U}_{l}^{e,i} \right)}=0.
\end{align*}
Hence, $\mathcal L_h(\mathbf U_h) = \mathbf 0$.
\end{proof}

\begin{remark}
The proof of Theorem \ref{thm:wb1} shows the necessity of choosing the numerical equilibrium state as the Gauss–Lobatto interpolating polynomial. 
For alternative approximations, $\hat F_1\, \text{and}\, \hat F_3$ in \eqref{eq:wbpf1}  may be nonzero due to possible discontinuities at interfaces. In \cite{du2024well}, the authors proposed a specialized choice of $\alpha$ for the Lax-Friedrichs flux, where $\alpha$ vanishes when the solution $\mathbf U_h$ coincides with the numerical equilibrium state $\mathbf U_h^e$, ensuring $\hat F_1=\hat F_3=0$.
However, the modified Lax-Friedrichs flux is not entropy stable according to the definition \ref{eq:entropystableflux}, which requires $\alpha$ to be sufficiently large \cite{guermond2016fast, chen2017entropy}. 
\end{remark}

\begin{remark}
For the fully-discrete scheme \eqref{eq:EF}, if $\mathbf U_h^n=\mathbf U_h^e$, then $\mathcal L_h(\mathbf U_h^n)=\mathbf 0$, yielding $\mathbf U_h^{EF}=\mathbf U_h^e$. 
When $\mathbf U^e(x)\in\mathscr G$, we have 
$$\mathbf U_{i_1}^{EF,i} =\mathbf U^e(x_i(X_{i_1})) \in\mathscr G$$
for all $i,i_1$, resulting in $\theta_i^{(1)}=\theta_i^{(2)}=1$. Therefore, we have $\mathbf U_h^{n+1}=\Pi_h(\mathbf U_h^{EF})=\mathbf U_h^e$, i.e., the fully-discrete scheme is WB.
\end{remark}

\subsubsection{Entropy-stable property}

\begin{theorem}[Entropy stability]
Assume that the boundary is periodic, compact support, or reflective. 
Then, the semi-discrete scheme \eqref{eq:nodalES} is entropy conservative within each cell in the sense of
\begin{equation}\label{eq:entropy_con}
\frac{\mathrm{d}}{\mathrm{d}t} \left( \sum_{i_1=0}^k \frac{\Delta x}{2} \omega _{i_1} \mathcal{U} \left( \mathbf{U}_{i_1}^{i} \right) \right) = \mathcal{F} _{0}^{*,i} -\mathcal{F} _{k}^{*,i},
\end{equation}
and ES in the sense of
\begin{equation}\label{eq:entropy_sta}
\frac{\mathrm d}{\mathrm dt}\mathcal U^{tot}(\mathbf U_h)
=\frac{\mathrm{d}}{\mathrm{d}t} \left( \sum_{i=1}^N \sum_{i_1=0}^k \frac{\Delta x}{2} \omega_{i_1} \mathcal{U} \left( \mathbf{U}_{i_1}^{i} \right) \right) \le 0,   
\end{equation}
where
$$
\mathcal{F} _{k}^{*,i}=\left( \mathbf{V}_{k}^{i} \right)^T\hat{\mathbf{F}}_{i+1/2}-\psi _{k}^{i} ,\quad \mathcal{F} _{0}^{*,i}=\left( \mathbf{V}_{0}^{i} \right) ^T\hat{\mathbf{F}}_{i-1/2}-\psi _{0}^{i}.
$$
\end{theorem}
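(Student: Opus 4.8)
The plan is to carry out a discrete entropy analysis that mirrors, node by node, the continuous manipulation producing \eqref{eq:EC}: contract the scheme \eqref{eq:nodalES} against the nodal entropy variables and sum over each cell. Concretely, I would multiply the $i_1$-th equation of \eqref{eq:nodalES} by $\omega_{i_1}(\mathbf V_{i_1}^i)^T$, where $\mathbf V_{i_1}^i=\mathcal U'(\mathbf U_{i_1}^i)^T$, and sum over $i_1=0,\dots,k$ for a fixed cell $K_i$. By the chain rule the time-derivative term collapses to $\frac{\mathrm d}{\mathrm dt}\sum_{i_1}\frac{\Delta x}{2}\omega_{i_1}\mathcal U(\mathbf U_{i_1}^i)$, which is the left-hand side of \eqref{eq:entropy_con}. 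It then remains to show that the two-point flux term, the interface correction, and the source term reassemble into the telescoping boundary quantity $\mathcal F_0^{*,i}-\mathcal F_k^{*,i}$.

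The technical core is the two-point flux term $\sum_{i_1,l}2\omega_{i_1}D_{i_1,l}(\mathbf V_{i_1}^i)^T\mathbf F^S(\mathbf U_{i_1}^i,\mathbf U_l^i)$. Using $S=MD$ I would write it with the stiffness entries $S_{i_1,l}=\omega_{i_1}D_{i_1,l}$ and split $S=\tfrac12(S-S^T)+\tfrac12 B$ via Lemma~\ref{lem:SBP}. For the diagonal part $\tfrac12 B$, consistency $\mathbf F^S(\mathbf U,\mathbf U)=\mathbf F(\mathbf U)$ yields the endpoint terms $(\mathbf V_k^i)^T\mathbf F_k^i-(\mathbf V_0^i)^T\mathbf F_0^i$. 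For the skew part, I would relabel the indices $i_1\leftrightarrow l$ and use the symmetry of $\mathbf F^S$ to symmetrize, so that the contraction produces the \emph{difference} $(\mathbf V_{i_1}^i-\mathbf V_l^i)^T\mathbf F^S(\mathbf U_{i_1}^i,\mathbf U_l^i)$, which the entropy-conservation identity \eqref{eq:ESflux} converts exactly into $\psi_{i_1}^i-\psi_l^i$; the row/column-sum identities $\sum_l S_{i_1,l}=0$ and $\sum_l S_{l,i_1}=\tau_{i_1}$ then collapse this to $\psi_0^i-\psi_k^i$. I expect this index bookkeeping, which is precisely what makes the SBP-plus-flux-differencing machinery telescope, to be the main obstacle.

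Adding the interface correction term $\sum_{i_1}\tau_{i_1}(\mathbf V_{i_1}^i)^T(\mathbf F_{i_1}^{*,i}-\mathbf F_{i_1}^i)$, which is supported only at $i_1\in\{0,k\}$, the interior flux values $(\mathbf V_0^i)^T\mathbf F_0^i$ and $(\mathbf V_k^i)^T\mathbf F_k^i$ cancel against those produced by the diagonal $B$-part, and by the definitions of $\mathcal F_0^{*,i}$ and $\mathcal F_k^{*,i}$ the flux-plus-correction contribution reduces to $\mathcal F_k^{*,i}-\mathcal F_0^{*,i}$. For the source I would verify the discrete analogue of $\mathbf V^T\mathbf S=0$ pointwise: since $\mathbf S_{i_1}^i=(0,\rho_{i_1}^i\Theta_{i_1}^i,m_{i_1}^i\Theta_{i_1}^i)^T$ while the second and third entries of $\mathbf V$ are $m/p$ and $-\rho/p$, one gets $(\mathbf V_{i_1}^i)^T\mathbf S_{i_1}^i=\frac{m}{p}\rho\Theta-\frac{\rho}{p}m\Theta=0$ at every node, exactly the discrete mirror of the identity $\mathbf V^T\mathbf S=0$ verified after \eqref{eq:entropy1}. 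Hence the source makes no contribution, and rearranging the contracted identity yields the per-cell conservation \eqref{eq:entropy_con}.

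For the global estimate \eqref{eq:entropy_sta} I would sum \eqref{eq:entropy_con} over $i=1,\dots,N$ and telescope. At each interior interface $x_{i+1/2}$ the adjacent contributions combine into $\mathcal F_0^{*,i+1}-\mathcal F_k^{*,i}=(\mathbf V_R-\mathbf V_L)^T\hat{\mathbf F}(\mathbf U_L,\mathbf U_R)-(\psi_R-\psi_L)$ with $\mathbf U_L=\mathbf U_k^i$ and $\mathbf U_R=\mathbf U_0^{i+1}$, which is $\le0$ by the entropy stability of $\hat{\mathbf F}$ (Definition~\ref{eq:entropystableflux}). The two physical-boundary terms $\mathcal F_0^{*,1}$ and $-\mathcal F_k^{*,N}$ are then handled according to the boundary condition: for periodic data they merge into one further interface term of the same $\le0$ form; for compactly supported data the boundary fluxes vanish; and for a reflective wall the vanishing normal velocity forces the boundary entropy flux to be zero. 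Summing, every contribution is non-positive, which gives \eqref{eq:entropy_sta}.
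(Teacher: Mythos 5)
Your proposal is correct and follows essentially the same route as the paper: contract the scheme with the nodal entropy variables $\omega_{i_1}(\mathbf V_{i_1}^i)^T$, reduce the flux-differencing volume term plus the interface correction to $-(\mathcal F_k^{*,i}-\mathcal F_0^{*,i})$, observe that the source contribution vanishes pointwise since $(\mathbf V_{i_1}^i)^T\mathbf S_{i_1}^i=\frac{m}{p}\rho\Theta-\frac{\rho}{p}m\Theta=0$, and telescope over cells using the entropy stability of $\hat{\mathbf F}$. The only difference is that you re-derive the key identity $\sum_{i_1,l}2S_{i_1,l}(\mathbf V_{i_1}^i)^T\mathbf F^S(\mathbf U_{i_1}^i,\mathbf U_l^i)=\sum_{i_1}\tau_{i_1}\bigl((\mathbf V_{i_1}^i)^T\mathbf F_{i_1}^i-\psi_{i_1}^i\bigr)$ from the SBP splitting $2S=(S-S^T)+B$, whereas the paper cites it from \cite{chen2017entropy}; your sketch of that derivation is sound.
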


\begin{proof}
Left multiplying \eqref{eq:nodalES} by $\mathbf V^T$ and integral in cell $K_i$ yields
$$\begin{aligned}
&\sum_{i_1=0}^k{\frac{\omega _{i_1}\Delta x}{2}\frac{\mathrm{d}\mathcal{U} \left( \mathbf{U}_{i_1}^{i} \right)}{\mathrm{d}t}}=\sum_{i_1=0}^k{\frac{\omega _{i_1}\Delta x}{2}\left( \mathbf{V}_{i_1}^{i} \right) ^T\frac{\mathrm{d}\mathbf{U}_{i_1}^{i}}{\mathrm{d}t}}
\\
&=-\sum_{i_1=0}^k{\sum_{l=0}^k{2S_{i_1,l}\left( \mathbf{V}_{i_1}^{i} \right) ^T\mathbf{F}^S\left( \mathbf{U}_{i_1}^{i},\mathbf{U}_{l}^{i} \right)}}-\sum_{i_1=0}^k{\tau _{i_1}\left( \mathbf{V}_{i_1}^{i} \right) ^T\left( \mathbf{F}_{i_1}^{*,i}-\mathbf{F}_{i_1}^{i} \right)}
\\&\quad +\sum_{i_1=0}^k{\frac{\omega _{i_1}}{2}\left( \mathbf{V}_{i_1}^{i} \right) ^T\mathbf{S}_{i_1}^{i}}
\\&=:-T_1-T_2+T_3.\end{aligned}
$$
For $T_1$, \cite{chen2017entropy} has proved that
\begin{equation*}\label{eq:T1}
\sum_{i_1=0}^k{\sum_{l=0}^k{2S_{i_1,l}\left( \mathbf{V}_{i_1}^{i} \right) ^T\mathbf{F}^S\left( \mathbf{U}_{i_1}^{i},\mathbf{U}_{l}^{i} \right)}}=\sum_{i_1=0}^k{\tau _{i_1}\left( \left( \mathbf{V}_{i_1}^{i} \right) ^T\mathbf{F}_{i_1}^{i}-\psi _{i_1}^{i} \right)}.
\end{equation*}
Hence 
$
-T_1-T_2=-(\mathcal{F} _{k}^{*,i}-\mathcal{F} _{0}^{*,i}).
$  
On the other hand, we have
$$
\left( \mathbf{V}_{i_1}^{i} \right) ^T\mathbf{S}_{i_1}^{i}=\left( \frac{m_{i_1}^{i}}{p_{i_1}^{i}} \right) \left( \rho _{i_1}^{i}\Theta _{i_1}^{i} \right) -\left( \frac{\rho _{i_1}^{i}}{p_{i_1}^{i}} \right) \left( m_{i_1}^{i}\Theta _{i_1}^{i} \right) =0,
$$
which implies $T_3 = 0$. Thus, we have proved \eqref{eq:entropy_con}.

Moreover, for an entropy stable flux $\hat{\mathbf{F}}$, \cite{chen2017entropy} demonstrated that the following inequality holds at each cell interface:
\begin{equation*}\label{eq:FhatES}
-(\mathcal{F} _{k}^{*,i}-\mathcal{F} _{0}^{*,i+1})=\left( \psi _{k}^{i}-\psi _{0}^{i+1} \right) -\left( \mathbf{V}_{k}^{i}-\mathbf{V}_{0}^{i+1} \right) ^T\hat{\mathbf{F}}_{i+1/2}\le 0.
\end{equation*} 
Hence, summing \eqref{eq:entropy_con} over $i$ yields \eqref{eq:entropy_sta}.
\end{proof}

\begin{remark}
Most existing WB schemes \cite{li2016well, chandrashekar2017well, du2024well} only modify the discretization of the second component of source term $\mathbf S$, because the third component is automatically zero for a steady state solution. But in our proposed scheme, to further satisfy the entropy condition, it is also necessary to modify the discretization of the third component of $\mathbf S$ to ensure $T_3 = 0$.
\end{remark}

\begin{remark}
It has been proved in \cite{chen2017entropy} that the PP limiter $\Pi_h$ \eqref{pp} will not increase the ``total entropy" in each cell, i.e.,
\begin{equation}\label{eq:ESlimiter}
\sum\limits_{i_1=0}^k \omega_{i_1} \mathcal U((\Pi_h \mathbf U_h)_{i_1}^{i})\le 
\sum\limits_{i_1=0}^k \omega_{i_1} \mathcal U(\mathbf U_{i_1}^i).
\end{equation}
Hence, if a fully discrete scheme of \eqref{eq:nodalES} is ES, then the scheme incorporating the PP limiter remains ES as well.

\end{remark}

\subsubsection{Positivity-preserving property}

\begin{theorem}[Positivity-preserving]
Assume $\mathbf U_{i_1}^{i,n}\in\mathscr{G}$ for all $i, i_1$. Then, the solution of the fully-discrete scheme \eqref{eq:EF} satisfies $\bar{\mathbf U}^{i,n+1}\in\mathscr G$ under the time step restriction
\begin{equation}\label{eq:CFLPP}
\lambda=\frac{\Delta t}{\Delta x} <\min \left\{ \frac{\omega _0}{4\alpha _0},\min_{i,i_1} \left\{ \frac{1}{4\left|\Theta _{i_1}^{i}\right|}\sqrt{\frac{1}{\left( \gamma -1 \right) \beta _{i_1}^{i}}} \right\} \right\},\quad \alpha_0=\max\limits_{\Omega}\left\{ \left|u\right|+c \right\}. 
\end{equation}
\end{theorem}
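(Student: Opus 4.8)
The plan is to prove the equivalent statement $\bar{\mathbf U}^{i,n+1}\in\mathscr G$ via $\bar{\mathbf U}^{i,n+1}=\bar{\mathbf U}^{i,EF}$, since the limiter $\Pi_h$ leaves the cell average unchanged. First I would derive an evolution equation for the cell average $\bar{\mathbf U}^{i}=\sum_{i_1}\frac{\omega_{i_1}}{2}\mathbf U_{i_1}^i$ by multiplying the nodal scheme \eqref{eq:nodalES} by $\omega_{i_1}$ and summing over $i_1$. The volume term $\sum_{i_1,l}2\omega_{i_1}D_{i_1,l}\mathbf F^S(\mathbf U_{i_1}^i,\mathbf U_l^i)=\sum_{i_1,l}2S_{i_1,l}\mathbf F^S(\mathbf U_{i_1}^i,\mathbf U_l^i)$ collapses: using the symmetry $\mathbf F^S(\mathbf U_{i_1}^i,\mathbf U_l^i)=\mathbf F^S(\mathbf U_l^i,\mathbf U_{i_1}^i)$ together with the SBP identity $S+S^T=B$ of Lemma \ref{lem:SBP}, it reduces to $\sum_{i_1}\tau_{i_1}\mathbf F(\mathbf U_{i_1}^i)=\mathbf F_k^i-\mathbf F_0^i$ by consistency of $\mathbf F^S$. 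Combining with the surface term $\sum_{i_1}\tau_{i_1}(\mathbf F_{i_1}^{*,i}-\mathbf F_{i_1}^i)$ telescopes the interior contributions and leaves, after a forward Euler step,
\[
\bar{\mathbf U}^{i,EF}=\bar{\mathbf U}^{i,n}-\lambda\left(\hat{\mathbf F}_{i+1/2}-\hat{\mathbf F}_{i-1/2}\right)+\lambda\sum_{i_1=0}^k\omega_{i_1}\mathbf S_{i_1}^i,
\]
with all right-hand quantities evaluated at $t^n$ and $\lambda=\Delta t/\Delta x$.

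The key device is then to split this update into two convex halves that decouple the flux and source constraints,
\[
\bar{\mathbf U}^{i,EF}=\tfrac12\Bigl[\bar{\mathbf U}^{i,n}-2\lambda\bigl(\hat{\mathbf F}_{i+1/2}-\hat{\mathbf F}_{i-1/2}\bigr)\Bigr]+\tfrac12\Bigl[\bar{\mathbf U}^{i,n}+2\lambda\textstyle\sum_{i_1=0}^k\omega_{i_1}\mathbf S_{i_1}^i\Bigr]=:\tfrac12\mathbf A+\tfrac12\mathbf B.
\]
Since $\mathscr G$ is convex, it suffices to show $\mathbf A\in\mathscr G$ and $\mathbf B\in\mathscr G$ separately. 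For the flux part $\mathbf A$ I would apply the Zhang--Shu decomposition \cite{zhang2010positivity}: writing $\bar{\mathbf U}^{i,n}=\sum_{i_1}\frac{\omega_{i_1}}{2}\mathbf U_{i_1}^i$ and inserting the auxiliary interior Lax--Friedrichs flux $\hat{\mathbf F}(\mathbf U_0^i,\mathbf U_k^i)$ to split $\hat{\mathbf F}_{i+1/2}-\hat{\mathbf F}_{i-1/2}$ across the two boundary nodes, $\mathbf A$ becomes a convex combination (weights $\frac{\omega_0}{2},\frac{\omega_k}{2},\frac{\omega_{i_1}}{2}$ summing to one) of the interior nodal values $\mathbf U_{i_1}^i\in\mathscr G$ and two first-order Lax--Friedrichs updates at the endpoints with effective ratio $\frac{4\lambda}{\omega_0}$. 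Each such update is itself a convex combination of $\mathbf U$ and the building blocks $\mathbf U\pm\frac1\alpha\mathbf F(\mathbf U)$, which lie in $\mathscr G$ precisely because the chosen wave speed satisfies $\alpha\ge|u|+c$ (ensured by including $\alpha^{RRF}$). This gives $\mathbf A\in\mathscr G$ under $\frac{4\lambda}{\omega_0}\alpha_0\le1$, i.e. $\lambda<\frac{\omega_0}{4\alpha_0}$.

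For the source part $\mathbf B$ I would write $\mathbf B=\sum_{i_1}\frac{\omega_{i_1}}{2}\bigl(\mathbf U_{i_1}^i+4\lambda\mathbf S_{i_1}^i\bigr)$, a convex combination over nodes, and check each node. Because $\mathbf S_{i_1}^i=(0,\rho_{i_1}^i\Theta_{i_1}^i,m_{i_1}^i\Theta_{i_1}^i)^T$ in \eqref{eq:wbsource} has vanishing density component, density is unchanged and stays positive; a direct computation of the pressure of $\mathbf U+4\lambda\mathbf S$ yields $p-\tfrac12(\gamma-1)(4\lambda)^2\rho\,\Theta^2$, so positivity holds iff $8(\gamma-1)\lambda^2\Theta^2<p/\rho=1/(2\beta)$, i.e. $\lambda<\frac{1}{4|\Theta|}\sqrt{1/((\gamma-1)\beta)}$. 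Taking the minimum of the two bounds over all $i,i_1$ reproduces exactly \eqref{eq:CFLPP}, and convexity of $\mathscr G$ then delivers $\bar{\mathbf U}^{i,EF}\in\mathscr G$.

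I expect the main work to be the first step, namely collapsing the entropy-conservative volume flux to a single-interface conservative cell-average update via the SBP property and the symmetry and consistency of $\mathbf F^S$; this is where the specific structure of the proposed ES nodal scheme is essential. Once the cell average obeys a standard flux-plus-source finite-volume recursion, the positivity argument is the convex-splitting machinery, whose only delicate point is the bookkeeping of constants so that the two halves produce precisely the two factors $\frac{\omega_0}{4\alpha_0}$ and $\frac{1}{4|\Theta_{i_1}^i|}\sqrt{1/((\gamma-1)\beta_{i_1}^i)}$ appearing in the stated CFL condition.
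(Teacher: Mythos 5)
Your proposal is correct and follows essentially the same route as the paper: the paper's convex combination $\bar{\mathbf U}^{i,n+1}=\frac{\omega_0}{4}\mathbf H_0+\frac{\omega_k}{4}\mathbf H_k+\sum_{i_1=1}^{k-1}\frac{\omega_{i_1}}{4}\mathbf U_{i_1}^{i,n}+\sum_{i_1=0}^k\frac{\omega_{i_1}}{4}(\mathbf U_{i_1}^{i,n}+4\lambda\mathbf S_{i_1}^i)$ is exactly your half-and-half split $\frac12\mathbf A+\frac12\mathbf B$ regrouped, with the same Zhang--Shu endpoint argument giving $\lambda<\omega_0/(4\alpha_0)$ and the same pressure computation $p^\star=p-8(\gamma-1)\lambda^2\rho\,\Theta^2$ giving the source bound. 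The only difference is that you spell out the SBP/consistency collapse of the entropy-conservative volume term to the single-interface cell-average update, which the paper states without derivation.
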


\begin{proof}
Utilizing the definition \eqref{pp}, the PP limiter $\Pi_h$ does not change the cell average, i.e. $\bar{\mathbf U}^{i,n+1} = \bar{\mathbf U}^{i,EF}$.  
Thus, from \eqref{eq:EF}, we can obtain the equation for cell averages 
$$
\Delta x\frac{\bar{\mathbf{U}}^{i,n+1}-\bar{\mathbf{U}}^{i,n}}{\Delta t}=-\hat{\mathbf{F}}_{i+1/2}+\hat{\mathbf{F}}_{i-1/2}+\sum_{i_1=0}^k{\omega _{i_1}\mathbf{S}_{i_1}^{i}}.
$$
Following the approach in \cite{zhang2011positivity}, we derive the following:
\begin{align*}
\bar{\mathbf{U}}^{i,n+1}&=\sum_{i_1=0}^k{\frac{\omega _{i_1}}{2}\mathbf{U}_{i_1}^{i,n}}-\lambda \left[ \hat{\mathbf{F}}\left( \mathbf{U}_{k}^{i,n},\mathbf{U}_{0}^{i+1,n} \right) -\hat{\mathbf{F}}\left( \mathbf{U}_{k}^{i-1,n},\mathbf{U}_{0}^{i,n} \right) \right] +\lambda \sum_{i_1=0}^k{\omega _{i_1}\mathbf{S}_{i_1}^{i}}
\\
&=\frac{\omega _0}{4}\mathbf{U}_{0}^{i,n}-\lambda \left[ \hat{\mathbf{F}}\left( \mathbf{U}_{0}^{i,n},\mathbf{U}_{k}^{i,n} \right) -\hat{\mathbf{F}}\left( \mathbf{U}_{k}^{i-1,n},\mathbf{U}_{0}^{i,n} \right) \right] 
\\
&\quad + \frac{\omega _k}{4} \mathbf{U}_{k}^{i,n}-\lambda \left[ \hat{\mathbf{F}}\left( \mathbf{U}_{k}^{i,n},\mathbf{U}_{0}^{i+1,n} \right) -\hat{\mathbf{F}}\left( \mathbf{U}_{0}^{i,n},\mathbf{U}_{k}^{i,n} \right) \right] 
\\
&\quad +\frac{1}{2}\sum_{i_1=1}^{k-1}{\frac{\omega _{i_1}}{2}\mathbf{U}_{i_1}^{i,n}}
+\frac{1}{2}\sum_{i_1=0}^k{\frac{\omega _{i_1}}{2}\mathbf{U}_{i_1}^{i,n}}
+\lambda \sum_{i_1=0}^k{\omega _{i_1}\mathbf{S}_{i_1}^{i}}
\\
&=\frac{\omega_0}{4} \mathbf{H}_0 +\frac{\omega_k}{4}\mathbf{H}_k
+\sum_{i_1=1}^{k-1}{\frac{\omega _{i_1}}{4} \mathbf{U}_{i_1}^{i,n}}
+\sum_{i_1=0}^k \frac{\omega _{i_1}}{4} \left( \mathbf{U}_{i_1}^{i,n}+4\lambda \mathbf{S}_{i_1}^{i} \right) ,
\end{align*}
which is a convex combination of $\mathbf{H}_0$, $\mathbf{H}_k$, $\{\mathbf{U}_{i_1}^{i,n}\}_{i_1=1}^{k-1}$, $\{\mathbf{U}_{i_1}^{i,n}+4\lambda \mathbf{S}_{i_1}^{i} \}_{i_1=0}^{k}$ as $\displaystyle\sum\limits_{i_1=0}^k\omega_{i_1}=2$. 
Here
$$
\mathbf{H}_0= \mathbf{U}_{0}^{i,n}-\frac{4\lambda}{\omega _0}\left[ \hat{\mathbf{F}}\left( \mathbf{U}_{0}^{i,n},\mathbf{U}_{k}^{i,n} \right) -\hat{\mathbf{F}}\left( \mathbf{U}_{k}^{i-1,n},\mathbf{U}_{0}^{i,n} \right) \right], 
$$
$$
\mathbf{H}_k= \mathbf{U}_{k}^{i,n}-\frac{4\lambda}{\omega _k}\left[ \hat{\mathbf{F}}\left( \mathbf{U}_{k}^{i,n},\mathbf{U}_{0}^{i+1,n} \right) -\hat{\mathbf{F}}\left( \mathbf{U}_{0}^{i,n},\mathbf{U}_{k}^{i,n} \right) \right].
$$
Given the convexity of $\mathscr G$, if we have 
\begin{equation}\label{eq:ppreq}\mathbf H_0\in\mathscr G,\quad \mathbf H_k\in\mathscr G,\quad 
\mathbf U_{i_1}^{i,n}+4\lambda\mathbf S_{i_1}^i \in\mathscr G,
\end{equation}
then $\bar{\mathbf U}^{i,n+1}\in\mathscr G$. 
In \cite{zhang2011positivity}, it is proved that $\mathbf H_0, \mathbf H_k\in\mathscr G$ when $4\lambda\alpha_0/\omega_0\le 1.$ On the other hand, given \eqref{eq:wbsource}, we have
$$
\mathbf{U}_{i_1}^{i,n}+4\lambda \mathbf{S}_{i_1}^{i}=\left[ \begin{array}{c}
	\rho _{i_1}^{i,n}\\
	m_{i_1}^{i,n}+4\lambda \rho _{i_1}^{i,n}\Theta _{i_1}^{i}\\
	\mathcal{E} _{i_1}^{i,n}+4\lambda m_{i_1}^{i,n}\Theta _{i_1}^{i}\\
\end{array} \right]=:\mathbf U^\star. 
$$
Evidently, the density of state $\mathbf U^\star$ is $\rho _{i_1}^{i,n}$ and is therefore positive. Then, $\mathbf U^\star\in\mathscr G$ if and only if
\begin{equation}\label{eq:ppeq}
\mathcal{E} _{i_1}^{i,n}+4\lambda m_{i_1}^{i,n}\Theta _{i_1}^{i}-\frac{\left( m_{i_1}^{i,n}+4\lambda \rho _{i_1}^{i,n}\Theta _{i_1}^{i} \right) ^2}{2\rho _{i_1}^{i,n}}>0.
\end{equation}
Solving the inequality \eqref{eq:ppeq} yields
$$
\lambda <\frac{1}{4\left|\Theta_{i_1}^i\right|}\sqrt{\frac{1}{\left( \gamma -1 \right) \beta_{i_1}^i}},
$$
which is the condition for $\lambda$ in \eqref{eq:CFLPP}. Thus, the proof is complete.

\end{proof}

\begin{remark}
   Under the restriction \eqref{eq:CFLPP}, we apply the limiter $\Pi_h$ after each forward Euler step, which will enforce the nodal values satisfying ${\mathbf U}^{i,n+1}_{i_1}\in\mathscr G$  at Gauss-Lobatto points. 
\end{remark}

\section{Structure-preserving nodal DG  method in two dimensions}\label{sec4}

The 2D Euler equations with gravity are expressed as
\begin{equation}\label{eq:EulerG-2D}
\left[ \begin{array}{c}
	\rho\\
	m\\
	n\\
	\mathcal{E}\\
\end{array} \right] _t+\left[ \begin{array}{c}
	m\\
	\rho u^2+p\\
	\rho uv\\
	u\left( \mathcal{E} +p \right)\\
\end{array} \right] _x+\left[ \begin{array}{c}
	n\\
	\rho uv\\
	\rho v^2+p\\
	v\left( \mathcal{E} +p \right)\\
\end{array} \right] _y=\left[ \begin{array}{c}
	0\\
	-\rho \phi _x\\
	-\rho \phi _y\\
	-m\phi _x-n\phi _y\\
\end{array} \right],
\end{equation}
with $m=\rho u,\ n=\rho v$, and are denoted by
$$ \mathbf U_t+\mathbf F(\mathbf U)_x+\mathbf G(\mathbf U)_y=\mathbf S(\mathbf U,x,y). $$
The corresponding entropy fluxes $\mathcal F_1,\ \mathcal F_2$ in \eqref{eq:entropy} are denoted by $\mathcal F,\ \mathcal G$, and the potential fluxes $\psi_1,\ \psi_2$ in \eqref{eq:psi} are denoted by $\psi_F,\ \psi_G$. 
Assume that the 2D spatial domain $\Omega=[a,b]\times[c,d]$ is divided into an $N_x\times N_y$ uniform
rectangular mesh with cells $\mathcal K=\{K_{ij}=[x_{i-1/2},x_{i+1/2}]\times [y_{j-1/2},y_{j+1/2}]\}$. The mesh sizes in $x$ and $y$ directions are denoted by $\Delta x$ and $\Delta y$, respectively, where $\Delta x=x_{i+1/2}-x_{i-1/2}$ and $\Delta y =y_{j+1/2}-y_{j-1/2} $.

\subsection{Proposed scheme}

For a given stationary steady state $\mathbf U^e(x)$ of \eqref{eq:EulerG-2D}, we have
$$
-\rho \phi _x=-\frac{\rho}{\rho ^e}\rho ^e\phi _x=\frac{\rho}{\rho ^e}p_{x}^{e},\ \  
-m\phi _x=-\frac{m}{\rho ^e}\rho ^e\phi _x=\frac{m}{\rho ^e}p_{x}^{e},\ \  
-n\phi_y=-\frac{n}{\rho^e}\rho^e\phi_y=\frac{n}{\rho^e}p_y^e,
$$
and $p^e = F_2(\mathbf U^e)=G_3(\mathbf U^e)$. 
We introduce the following shorthand notation
$$
x_i\left( X \right) = x_{i} +\frac{\Delta x}{2}X,\quad  
y_j\left( Y \right) = y_{j} +\frac{\Delta y}{2}Y, $$
$$	\mathbf{U}_{i_1,j_1}^{i,j}=\mathbf{U}_h\left( x_i\left( X_{i_1} \right) ,y_j\left( Y_{j_1} \right) \right) ,\,\, 
\mathbf{F}_{i_1,j_1}^{i,j}=\mathbf{F}\left( \mathbf{U}_{i_1,j_1}^{i,j} \right) ,\,\, 
\mathbf{G}_{i_1,j_1}^{i,j}=\mathbf{G}\left( \mathbf{U}_{i_1,j_1}^{i,j} \right), 
$$
$$
    \mathbf{F}_{i_1,j_1}^{*,i,j}=\left\{ \begin{array}{ll}
    \mathbf{\hat{F}}\left( \mathbf U^{i-1,j}_{k,j_1},\mathbf U^{i,j}_{0,j_1} \right)=:\hat{\mathbf F}^{i-1/2,j}_{j_1} ,& i_1=0,\\	
    \mathbf{0},& 0<i_1<k,\\
    \mathbf{\hat{F}}\left( \mathbf U^{i,j}_{k,j_1},\mathbf U^{i+1,j}_{0,j_1} \right)=:\hat{\mathbf F}^{i+1/2,j}_{j_1} ,& i_1=k,\\	
\end{array}\right.
$$
$$
    \mathbf{G}_{i_1,j_1}^{*,i,j}=\left\{ \begin{array}{ll}
	\mathbf{\hat{G}}\left( \mathbf U^{i,j-1}_{i_1,k},\mathbf U^{i,j}_{i_1,0} \right)=:\hat{\mathbf G}^{i,j-1/2}_{i_1} ,& j_1=0,\\	
        \mathbf{0},& 0<j_1<k,\\
	\mathbf{\hat{G}}\left( \mathbf U^{i,j}_{i_1,k},\mathbf U^{i,j+1}_{i_1,0} \right)=:\hat{\mathbf G}^{i,j+1/2}_{i_1} ,& j_1=k.\\	
	\end{array}\right.
	$$
Similar to the 1D case, the proposed 2D nodal DG scheme in each cell is designed as 
\begin{equation}\label{eq:nodalES2D}
\begin{aligned}
    \frac{\mathrm{d}\mathbf{U}_{i_1,j_1}^{i,j}}{\mathrm{d}t}=&-\frac{2}{\Delta x}\sum_{l=0}^{k}{2D_{i_1,l}}\mathbf{F}^S\left( \mathbf{U}_{i_1,j_1}^{i,j},\mathbf{U}_{l,j_1}^{i,j} \right) +\frac{2}{\Delta x}\frac{\tau _{i_1}}{\omega _{i_1}}\left( \mathbf{F}_{i_1,j_1}^{i,j}-\mathbf{F}_{i_1,j_1}^{*,i,j} \right) 
	\\
	&-\frac{2}{\Delta y}\sum_{l=0}^{k}{2D_{j_1,l}}\mathbf{G}^S\left( \mathbf{U}_{i_1,j_1}^{i,j},\mathbf{U}_{i_1,l}^{i,j} \right) +\frac{2}{\Delta y}\frac{\tau _{j_1}}{\omega _{j_1}}\left( \mathbf{G}_{i_1,j_1}^{i,j}-\mathbf{G}_{i_1,j_1}^{*,i,j} \right) + \mathbf S_{i_1,j_1}^{i,j}, 
\end{aligned}
\end{equation}
    where, $\mathbf F^S$ and  $\mathbf G^S$ are the entropy conservative fluxes, $\hat{\mathbf F}$ and $\hat{\mathbf G}$ are the entropy stable fluxes, 
    $$ \mathbf S_{i_1,j_1}^{i,j}=\left( 0,\frac{2}{\Delta x}\rho_{i_1,j_1}^{i,j}\Theta_{i_1,j_1}^{i,j},\frac{2}{\Delta y}\rho_{i_1,j_1}^{i,j}\Xi_{i_1,j_1}^{i,j}, \frac{2}{\Delta x}m_{i_1,j_1}^{i,j}\Theta_{i_1,j_1}^{i,j}+\frac{2}{\Delta y}n_{i_1,j_1}^{i,j}\Xi_{i_1,j_1}^{i,j}\right)^T, $$
    and
    $$\begin{aligned}
\Theta _{i_1,j_1}^{i,j}&=\frac{1}{\rho _{i_1,j_1}^{e,i,j}}\sum_{l=0}^k{2D_{i_1,l}F^S_2\left( \mathbf{U}_{i_1,j_1}^{e,i,j},\mathbf{U}_{l,j_1}^{e,i,j} \right)},
\\ \Xi _{i_1,j_1}^{i,j}&=\frac{1}{\rho _{i_1,j_1}^{e,i,j}}\sum_{l=0}^k{2D_{j_1,l}G_3^S\left( \mathbf{U}_{i_1,l}^{e,i,j},\mathbf{U}_{i_1,j_1}^{e,i,j} \right)}.\end{aligned}
$$

Same to the 1D case, coupling with the Euler forward time discretization, the fully-discrete scheme is given by
\begin{equation}\label{eq:EF2D} 
\mathbf U^{n+1}_h = \Pi_h^{2D}(\mathbf U_h^n+\Delta t\cdot\mathcal L_h(\mathbf U_h^n))=:\Pi_h^{2D}(\mathbf U_h^{EF}),
\end{equation}
where $\Pi_h^{2D}$ is the two-dimensional scaling PP limiter to correct the density and pressure at each Gauss-Lobatto point to be positive. The definition of $\Pi^{2D}_h$ is similar to $\Pi_h$ in 1D \eqref{pp}. The only difference is that the stencil points on $K_{ij}$ are $\{(X_i(x_{i_1}),Y_j(y_{j_1}))\}_{i_1,j_1=0}^{k}$, which is the tensor product of 1D stencil points along $x$- and $y$- directions. 
Note that this stencil is different from that used in \cite{zhang2010positivity, zhang2011positivity}, which uses the tensor product of Gauss-Lobatto points and Gauss points. Here, we directly use the tensor product of Gauss–Lobatto points, which is a more natural and efficient choice within the nodal DG framework.

\subsection{Properties of the scheme}

In this section, we introduce the properties of the 2D scheme. 

\subsubsection{Well-balanced property}
\begin{theorem}[Well-balancedness]\label{thm:wb2}
The scheme \eqref{eq:nodalES2D} is well-balanced for a general known hydrostatic state solution $\mathbf U^e=(\rho^e,0,0,\mathcal E^e)$, i.e.
\begin{equation}
\mathcal L_h(\mathbf U_h)=\mathbf 0,\qquad \mathrm{if}\ \ \mathbf U_h=\mathbf U_h^e.
\end{equation}
\end{theorem}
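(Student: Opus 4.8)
The plan is to mirror the structure of the 1D proof of Theorem \ref{thm:wb1}, exploiting the componentwise and directional decoupling that occurs at a zero-velocity equilibrium. Setting $\mathbf U_h = \mathbf U_h^e$ gives $u=v=0$, hence $m=n=0$, at every Gauss--Lobatto node. I would first dispose of the density and energy equations (components $1$ and $4$). For these, the physical fluxes satisfy $F_1=G_1=0$ (as $m=n=0$) and $F_4=G_4=0$ (as $u=v=0$); the entropy conservative fluxes $F_1^S,G_1^S,F_4^S,G_4^S$ vanish for the same reason, since Chandrashekar's flux builds the density and energy components from the averaged velocities $\bar u,\bar v$, which are zero; and, because $\mathbf U_h^e$ interpolates $\mathbf U^e$ at Gauss--Lobatto points that include the cell interfaces, $\mathbf U_h^e$ is continuous across interfaces, so the Lax--Friedrichs dissipation drops out and $\hat F_1=\hat G_1=\hat F_4=\hat G_4=0$ there. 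Combined with the source term, whose first component is identically zero and whose fourth component $\tfrac{2}{\Delta x}m\,\Theta+\tfrac{2}{\Delta y}n\,\Xi$ carries the factors $m=n=0$, this yields $\mathcal L_{h,1}(\mathbf U_h^e)=\mathcal L_{h,4}(\mathbf U_h^e)=0$.

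Next I would treat the $x$-momentum equation (component $2$). The crucial observation is that the $y$-direction contributions to this component all involve the cross flux $G_2=\rho u v$, which vanishes at $u=v=0$; the corresponding entropy conservative flux $G_2^S$ and entropy stable flux $\hat G_2$ likewise vanish (the former because it is proportional to $\bar u\bar v$-type averages, the latter additionally using interface continuity to annihilate the dissipation). Hence only the $x$-direction terms survive and the equation reduces exactly to the 1D momentum balance of Theorem \ref{thm:wb1}: continuity of $\mathbf U_h^e$ across the $x$-interfaces gives $\hat F_2=p^e$ there, so the boundary term $\tfrac{2}{\Delta x}\tfrac{\tau_{i_1}}{\omega_{i_1}}(F_{2}-F_{2}^{*})$ vanishes, while the modified source $\tfrac{2}{\Delta x}\rho_{i_1,j_1}^{i,j}\Theta_{i_1,j_1}^{i,j}$, in which the ratio $\rho/\rho^e=1$, precisely cancels the flux-differencing term $\tfrac{2}{\Delta x}\sum_l 2D_{i_1,l}F_2^S(\mathbf U^{e},\mathbf U^{e})$ by the very definition of $\Theta_{i_1,j_1}^{i,j}$. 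This gives $\mathcal L_{h,2}(\mathbf U_h^e)=0$.

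The $y$-momentum equation (component $3$) then follows by the built-in $x\leftrightarrow y$ symmetry of scheme \eqref{eq:nodalES2D}: the $x$-direction contributions involve $F_3=\rho u v=0$ and drop out together with their numerical counterparts, and the $y$-direction reduces to the same 1D argument with $\Xi$ playing the role of $\Theta$. The main obstacle, and the only step requiring genuine care beyond the 1D case, is verifying the decoupling of the two spatial directions --- that is, checking that the cross-momentum entropy conservative fluxes $F_3^S$ and $G_2^S$ and their entropy stable analogues indeed vanish at $u=v=0$, and that the interpolation-based interface continuity holds simultaneously in both directions so that every Lax--Friedrichs dissipation term vanishes. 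Once this decoupling is established, each momentum component is reduced to a one-dimensional balance, and the conclusion $\mathcal L_h(\mathbf U_h^e)=\mathbf 0$ follows directly from the reasoning of Theorem \ref{thm:wb1}.
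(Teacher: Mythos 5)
Your proof is correct and follows exactly the route the paper intends: the paper omits the 2D proof, stating only that it ``follows the same procedure as the 1D case,'' and your argument is precisely that procedure carried out in detail --- using $u=v=0$ to kill the density, energy, and cross-momentum fluxes (physical, entropy conservative, and interface), interface continuity of the Gauss--Lobatto interpolant to remove the Lax--Friedrichs dissipation, and the definitions of $\Theta$ and $\Xi$ with $\rho/\rho^e=1$ to cancel the surviving flux-differencing terms componentwise. No gaps.
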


The proof follows the same procedure as the 1D case and is therefore omitted for brevity.

\subsubsection{Entropy-stable property}
\begin{theorem}[Entropy-stability]
Assume the boundaries are periodic or compactly supported, then \eqref{eq:nodalES2D} is entropy conservative within a single element
    \begin{equation} \label{eq:entropy_conservative}
        \begin{aligned}
		&\frac{\mathrm{d}}{\mathrm{d}t}\left( \frac{\Delta x\Delta y}{4}\sum_{i_1,j_1=0}^{k}{\omega _{i_1}\omega _{j_1}\mathcal U\left(\mathbf U_{i_1,j_1}^{i,j} \right)} \right) \\&\quad 
        =-\frac{\Delta y}{2}\sum_{j_1=0}^{k}{\omega _{j_1}\left( \mathcal{F} _{k,j_1}^{*,i,j}-\mathcal{F} _{0,j_1}^{*,i,j} \right)}
        -\frac{\Delta x}{2}\sum_{i_1=0}^{k}{\omega _{i_1}\left( \mathcal{G} _{i_1,k}^{*,i,j}-\mathcal{G} _{i_1,0}^{*,i,j} \right)},
        \end{aligned}
    \end{equation}
    and ES in the sense of  \begin{equation}\label{eq:entropy_stable}
		 \frac{\mathrm{d}}{\mathrm{d}t}\left( \frac{\Delta x\Delta y}{4}\sum_{i,j=1}^{N_x,N_y}{\sum_{i_1,j_1=0}^{k}{\omega _{i_1}\omega _{j_1}\mathcal U\left(\mathbf U_{i_1,j_1}^{i,j}\right)}} \right) \le 0,
    \end{equation}where\begin{align*}
		\mathcal{F} _{k,j_1}^{*,i,j}&= \left( \mathbf{V}_{k,j_1}^{i,j} \right) ^T\mathbf{F}_{k,j_1}^{*,i,j}-\psi _{F,k,j_1}^{i,j} ,\quad 
		\mathcal{F} _{0,j_1}^{*,i,j}=\left( \mathbf{V}_{0,j_1}^{i,j} \right) ^T\mathbf{F}_{0,j_1}^{*,i,j} -\psi _{F,0,j_1}^{i,j},
		\\
		\mathcal{G} _{i_1,k}^{*,i,j}&= \left( \mathbf{V}_{i_1,k}^{i,j} \right) ^T\mathbf{G}_{i_1,k}^{*,i,j} -\psi _{G,i_1,k}^{i,j},\quad \mathcal{G} _{i_1,0}^{*,i,j}=\left( \mathbf{V}_{i_1,0}^{i,j} \right) ^T\mathbf{G}_{i_1,0}^{*,i,j}- \psi _{G,i_1,0}^{i,j} .
    \end{align*}
\end{theorem}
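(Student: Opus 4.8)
The plan is to mirror the one-dimensional entropy-stability argument in the tensor-product setting, reducing the 2D estimate to two independent 1D calculations along each coordinate direction. First I would left-multiply the 2D scheme \eqref{eq:nodalES2D} by $\left(\mathbf V_{i_1,j_1}^{i,j}\right)^T$ and contract against the full quadrature weight $\frac{\Delta x\Delta y}{4}\omega_{i_1}\omega_{j_1}$, summing over all nodes $i_1,j_1$. Because $\left(\mathbf V_{i_1,j_1}^{i,j}\right)^T \frac{\mathrm d\mathbf U_{i_1,j_1}^{i,j}}{\mathrm dt} = \frac{\mathrm d}{\mathrm dt}\mathcal U\left(\mathbf U_{i_1,j_1}^{i,j}\right)$, the left side immediately becomes the time derivative of the discrete total entropy in the cell. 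The right side splits into an $x$-flux contribution, a $y$-flux contribution, and a source contribution, which I will handle separately.

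For the $x$-direction terms, I would fix $j_1$ and recognize that the sum over $i_1$ and $l$ of $2S_{i_1,l}\left(\mathbf V_{i_1,j_1}^{i,j}\right)^T\mathbf F^S(\mathbf U_{i_1,j_1}^{i,j},\mathbf U_{l,j_1}^{i,j})$ together with the boundary flux-difference term is exactly the structure already analyzed in the 1D proof via \cite{chen2017entropy} and the SBP property $S+S^T=B$ of Lemma \ref{lem:SBP}. This collapses the interior volume terms into the telescoping potential/entropy-flux difference $\mathcal F_{k,j_1}^{*,i,j}-\mathcal F_{0,j_1}^{*,i,j}$, weighted by $\frac{\Delta y}{2}\omega_{j_1}$ after summing over $j_1$. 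The $y$-direction terms are treated identically with $D$, $S$, $\tau$ acting on the $j_1$ index and $\mathbf G^S$ in place of $\mathbf F^S$, yielding $\mathcal G_{i_1,k}^{*,i,j}-\mathcal G_{i_1,0}^{*,i,j}$ weighted by $\frac{\Delta x}{2}\omega_{i_1}$. Combining gives precisely the right-hand side of \eqref{eq:entropy_conservative}.

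The crux — and the step most likely to require care — is showing the source contribution vanishes pointwise, i.e. $\left(\mathbf V_{i_1,j_1}^{i,j}\right)^T\mathbf S_{i_1,j_1}^{i,j}=0$ for every node. Here I would use the explicit form of the entropy variable $\mathbf V$, whose momentum components are $\rho\mathbf u/p$ and whose energy component is $-\rho/p$, against the structured source $\mathbf S_{i_1,j_1}^{i,j}$. Writing out the contraction, the $\Theta$-terms produce $\frac{m}{p}\cdot\rho\Theta - \frac{\rho}{p}\cdot m\Theta$ and the $\Xi$-terms produce $\frac{n}{p}\cdot\rho\Xi - \frac{\rho}{p}\cdot n\Xi$, each of which cancels exactly, paralleling the 1D computation $T_3=0$ but now with two source mechanisms. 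This cancellation is structurally why the source discretization was designed with matched coefficients in both the momentum and energy equations.

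Finally, to pass from the per-cell identity \eqref{eq:entropy_conservative} to the global inequality \eqref{eq:entropy_stable}, I would sum over all cells $K_{ij}$. Interior interfaces are shared between adjacent cells, and at each interface the entropy-stable property of $\hat{\mathbf F}$ and $\hat{\mathbf G}$ (Definition \ref{eq:entropystableflux}) guarantees the interface contributions combine into nonpositive jump terms of the form $\left(\mathbf V_R-\mathbf V_L\right)^T\hat{\mathbf F}-(\psi_R-\psi_L)\le 0$, exactly as in the 1D proof; the boundary terms drop under periodic or compactly supported conditions. The main obstacle is bookkeeping the two-index telescoping correctly so that the $x$- and $y$-interface sums decouple cleanly, but since the scheme is a genuine tensor product this reduces to applying the 1D result twice, and I expect no new analytical difficulty beyond careful indexing.
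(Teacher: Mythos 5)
Your proposal is correct and takes essentially the same approach as the paper: the paper omits the 2D proof entirely, stating only that it is similar to the 1D case, and your dimension-by-dimension reduction (applying the SBP/flux-differencing identity of \cite{chen2017entropy} separately in $x$ and $y$), the pointwise cancellation $\left(\mathbf V_{i_1,j_1}^{i,j}\right)^T\mathbf S_{i_1,j_1}^{i,j}= \frac{2}{\Delta x}\left(\frac{m}{p}\rho\Theta-\frac{\rho}{p}m\Theta\right)+\frac{2}{\Delta y}\left(\frac{n}{p}\rho\Xi-\frac{\rho}{p}n\Xi\right)=0$, and the interface-wise use of the entropy-stable flux property under periodic or compactly supported boundaries is precisely that intended argument.
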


We omit the proof here as it is similar to the 1D case.

\subsubsection{Positivity-preserving property}

\begin{theorem}[Positivity-preserving]
Given $\mathbf U_{i_1,j_1}^{i,j,n}\in\mathscr G$ on all Gauss-Lobatto quadrature points, the fully-discrete 2D scheme \eqref{eq:EF2D} satisfies 
$\bar{\mathbf U}^{i,j,n+1}\in\mathscr G$
under the time step restriction
\begin{equation}\label{eq:CFLPP2D}
\begin{aligned}
\frac{\Delta t}{\Delta x} &<\min \left\{ \frac{\omega _0}{8\alpha _{x,0}},\min \left\{ \frac{1}{4\left|\Theta_{i_1,j_1}^{i,j}\right| }\sqrt{\frac{1}{2\left( \gamma -1 \right) \beta_{i_1,j_1}^{i,j} }} \right\} \right\}, \quad \alpha_{x,0}=\max\limits_{\Omega}\left\{ \left|u\right|+c \right\},
\\ \frac{\Delta t}{\Delta y} &<\min \left\{ \frac{\omega _0}{8\alpha _{y,0}},\min \left\{ \frac{1}{4\left|\Xi_{i_1,j_1}^{i,j} \right|}\sqrt{\frac{1}{2\left( \gamma -1 \right) \beta_{i_1,j_1}^{i,j} }} \right\} \right\}, \quad \alpha_{y,0}=\max\limits_\Omega\{\left|v\right|+c\}.
\end{aligned}
\end{equation}
\end{theorem}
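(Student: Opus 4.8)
The plan is to mirror the one-dimensional positivity argument: reduce everything to the cell average and then exhibit $\bar{\mathbf U}^{i,j,n+1}$ as a convex combination of states lying in $\mathscr G$. Since $\Pi_h^{2D}$ leaves the cell average unchanged, it suffices to analyze the forward-Euler update of $\bar{\mathbf U}^{i,j}=\sum_{i_1,j_1=0}^k\frac{\omega_{i_1}\omega_{j_1}}{4}\mathbf U_{i_1,j_1}^{i,j}$. First I would multiply \eqref{eq:nodalES2D} by $\frac{\omega_{i_1}\omega_{j_1}}{4}$ and sum over $i_1,j_1$. Using Lemma \ref{lem:SBP} (i.e. $S+S^T=B$) together with the symmetry and consistency of $\mathbf F^S,\mathbf G^S$, the volume flux-differencing sums telescope line by line: for each fixed $j_1$ one gets $\sum_{i_1}\omega_{i_1}\sum_l 2D_{i_1,l}\mathbf F^S(\mathbf U_{i_1,j_1},\mathbf U_{l,j_1})=\mathbf F_{k,j_1}-\mathbf F_{0,j_1}$, which combines with the surface penalty $\sum_{i_1}\tau_{i_1}(\mathbf F_{i_1,j_1}-\mathbf F_{i_1,j_1}^{*})$ to leave only the interface numerical fluxes. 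Writing $\lambda_x=\Delta t/\Delta x$ and $\lambda_y=\Delta t/\Delta y$, this produces the conservative cell-average identity
\[
\bar{\mathbf U}^{i,j,n+1}=\bar{\mathbf U}^{i,j,n}+\lambda_x\sum_{j_1}\frac{\omega_{j_1}}{2}\left(\hat{\mathbf F}^{i-1/2,j}_{j_1}-\hat{\mathbf F}^{i+1/2,j}_{j_1}\right)+\lambda_y\sum_{i_1}\frac{\omega_{i_1}}{2}\left(\hat{\mathbf G}^{i,j-1/2}_{i_1}-\hat{\mathbf G}^{i,j+1/2}_{i_1}\right)+\Delta t\sum_{i_1,j_1}\frac{\omega_{i_1}\omega_{j_1}}{4}\mathbf S_{i_1,j_1}^{i,j}.
\]

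The crux is to split this update into three groups whose weights sum to one and each of which lands in $\mathscr G$: an $x$-flux part $P_x=\tfrac14\bar{\mathbf U}^{i,j,n}+\lambda_x\sum_{j_1}\frac{\omega_{j_1}}{2}(\hat{\mathbf F}^{i-1/2,j}_{j_1}-\hat{\mathbf F}^{i+1/2,j}_{j_1})$, a $y$-flux part $P_y$ defined symmetrically, and a source part $Q=\tfrac12\bar{\mathbf U}^{i,j,n}+\Delta t\sum_{i_1,j_1}\frac{\omega_{i_1}\omega_{j_1}}{4}\mathbf S_{i_1,j_1}^{i,j}$. Assigning a quarter of the average to each flux direction and a half to the source is exactly what reproduces the constants in \eqref{eq:CFLPP2D}; I expect identifying this particular allocation to be the main obstacle, because naively treating the $x$- and $y$-parts of the source separately, each drawing on its own fraction of the average, yields a strictly more restrictive bound that fails to match the stated CFL.

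For $Q$ I would rewrite it as $\sum_{i_1,j_1}\frac{\omega_{i_1}\omega_{j_1}}{8}\left(\mathbf U_{i_1,j_1}^{i,j,n}+2\Delta t\,\mathbf S_{i_1,j_1}^{i,j}\right)$, a half-weight convex combination. The density of $\mathbf U_{i_1,j_1}^{i,j,n}+2\Delta t\mathbf S_{i_1,j_1}^{i,j}$ is untouched, hence positive, while pressure positivity reduces, through the same cancellation as in \eqref{eq:ppeq} but now with augmented momenta $m+4\lambda_x\rho\,\Theta_{i_1,j_1}^{i,j}$ and $n+4\lambda_y\rho\,\Xi_{i_1,j_1}^{i,j}$, to the single inequality $\frac{p}{\gamma-1}>\frac{\rho}{2}\left((4\lambda_x)^2(\Theta_{i_1,j_1}^{i,j})^2+(4\lambda_y)^2(\Xi_{i_1,j_1}^{i,j})^2\right)$. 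Bounding each squared term by half of $\frac{p}{\gamma-1}$ and using $\beta=\rho/2p$ then gives precisely the two source restrictions $\lambda_x<\frac{1}{4|\Theta_{i_1,j_1}^{i,j}|}\sqrt{\frac{1}{2(\gamma-1)\beta_{i_1,j_1}^{i,j}}}$ and $\lambda_y<\frac{1}{4|\Xi_{i_1,j_1}^{i,j}|}\sqrt{\frac{1}{2(\gamma-1)\beta_{i_1,j_1}^{i,j}}}$ appearing in \eqref{eq:CFLPP2D}; the factor $\tfrac12$ under the root is exactly the equal split of the pressure budget between the two directions.

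For $P_x$ (and $P_y$ by the same reasoning) I would regroup $\tfrac14\bar{\mathbf U}^{i,j,n}=\sum_{j_1}\frac{\omega_{j_1}}{2}\sum_{i_1}\frac{\omega_{i_1}}{8}\mathbf U_{i_1,j_1}^{i,j,n}$ along $x$-lines and, on each line $j_1$, insert the central flux $\hat{\mathbf F}(\mathbf U_{0,j_1},\mathbf U_{k,j_1})$ to write the line's contribution as $\frac{\omega_0}{8}\mathbf H_0+\frac{\omega_k}{8}\mathbf H_k+\sum_{0<i_1<k}\frac{\omega_{i_1}}{8}\mathbf U_{i_1,j_1}$, exactly as in the one-dimensional positivity proof. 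Here $\mathbf H_0,\mathbf H_k$ are first-order Lax–Friedrichs updates of the boundary nodes with effective coefficient $8\lambda_x/\omega_0$, so by the Zhang–Shu result \cite{zhang2011positivity} they lie in $\mathscr G$ provided $8\lambda_x\alpha_{x,0}/\omega_0\le1$, i.e. $\lambda_x\le\omega_0/(8\alpha_{x,0})$, the remaining part of \eqref{eq:CFLPP2D}. Collecting all constituents of $P_x$, $P_y$, and $Q$ expresses $\bar{\mathbf U}^{i,j,n+1}$ as a single convex combination of states in $\mathscr G$ with total weight $\tfrac14+\tfrac14+\tfrac12=1$, so convexity of $\mathscr G$ yields $\bar{\mathbf U}^{i,j,n+1}\in\mathscr G$; applying $\Pi_h^{2D}$ afterwards then propagates positivity to every Gauss–Lobatto nodal value.
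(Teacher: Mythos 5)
Your proposal is correct and follows essentially the same route as the paper: both reduce to the cell-average update, allocate a quarter of each nodal weight to the $x$-fluxes, a quarter to the $y$-fluxes, and a half to the source, obtain the boundary-node states $\mathbf H$ with effective coefficient $8\lambda_x/\omega_0$ (resp. $8\lambda_y/\omega_0$) handled by the Zhang--Shu argument, and reduce the source states $\mathbf U+2\Delta t\,\mathbf S$ to the inequality $\lambda_x^2\Theta^2+\lambda_y^2\Xi^2<\tfrac{1}{16(\gamma-1)\beta}$, split equally between the two directions. The cancellation yielding that inequality and the resulting CFL constants all match the paper's proof.
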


\begin{proof} 
From \eqref{eq:EF2D}, we derive the equation of cell averages:
$$\begin{aligned}
\bar{\mathbf{U}}^{i,j,n+1}=&\ \bar{\mathbf{U}}^{i,j,n}-\lambda _x\sum_{j_1=0}^k{\frac{\omega _{j_1}}{2}\left( \hat{\mathbf{F}}_{j_1}^{i+1/2,j}-\hat{\mathbf{F}}_{j_1}^{i-1/2,j} \right)}
\\
&-\lambda _y\sum_{i_1=0}^k{\frac{\omega _{i_1}}{2}\left( \hat{\mathbf{G}}_{i_1}^{i,j+1/2}-\hat{\mathbf{G}}_{i_1}^{i,j-1/2} \right)}+\Delta t\sum_{i_1,j_1=0}^k{\frac{\omega _{i_1}\omega _{j_1}}{4}\mathbf{S}_{i_1,j_1}^{i,j}},
\end{aligned}
$$
where $\lambda_x=\Delta t/\Delta x,\ \lambda_y=\Delta t/\Delta y$. Since $\displaystyle\sum\limits_{i_1,j_1=0}^k \omega_{i_1}\omega_{j_1}=4$, the above equation can be rewritten in the form of a convex combination
$$\begin{aligned}
\bar{\mathbf{U}}^{i,j,n+1}=&\ \sum_{i_1=1}^{k-1}{\sum_{j_1=0}^k{\frac{\omega _{i_1}\omega _{j_1}}{16}\mathbf{U}_{i_1,j_1}^{i,j,n}}}+\sum_{i_1=0}^k{\sum_{j_1=1}^{k-1}{\frac{\omega _{i_1}\omega _{j_1}}{16}\mathbf{U}_{i_1,j_1}^{i,j,n}}}
\\
&\ +\sum_{j_1=0}^k{\frac{\omega _{i_1}\omega _{j_1}}{16}}\left( \mathbf{H}_{j_1}^{x,k}+\mathbf{H}_{j_1}^{x,0} \right) +\sum_{j_1=0}^k{\frac{\omega _{i_1}\omega _{j_1}}{16}}\left( \mathbf{H}_{i_1}^{y,k}+\mathbf{H}_{i_1}^{y,0} \right) 
\\
&\ +\sum_{i_1,j_1=0}^k{\frac{\omega _{i_1}\omega _{j_1}}{8}\left( \mathbf{U}_{i_1,j_1}^{i,j,n}+2\Delta t\cdot \mathbf{S}_{i_1,j_1}^{i,j} \right)},
\end{aligned}
$$
where
$$\begin{aligned}
\mathbf{H}_{j_1}^{x,k}=&\ \mathbf{U}_{k,j_1}^{i,j,n}-\frac{8\lambda _x}{\omega _0}\left[ \hat{\mathbf{F}}\left( \mathbf{U}_{k,j_1}^{i,j,n},\mathbf{U}_{0,j_1}^{i+1,j,n} \right) -\hat{\mathbf{F}}\left( \mathbf{U}_{0,j_1}^{i,j,n},\mathbf{U}_{k,j_1}^{i,j,n} \right) \right], 
\\
\mathbf{H}_{j_1}^{x,0}=&\ \mathbf{U}_{0,j_1}^{i,j,n}-\frac{8\lambda _x}{\omega _0}\left[ \hat{\mathbf{F}}\left( \mathbf{U}_{0,j_1}^{i,j,n},\mathbf{U}_{k,j_1}^{i,j,n} \right) -\hat{\mathbf{F}}\left( \mathbf{U}_{k,j_1}^{i-1,j,n},\mathbf{U}_{0,j_1}^{i,j,n} \right) \right], 
\\
\mathbf{H}_{i_1}^{y,k}=&\ \mathbf{U}_{i_1,k}^{i,j,n}-\frac{8\lambda _y}{\omega _0}\left[ \hat{\mathbf{G}}\left( \mathbf{U}_{i_1,k}^{i,j,n},\mathbf{U}_{i_1,0}^{i,j+1,n} \right) -\hat{\mathbf{G}}\left( \mathbf{U}_{i_1,0}^{i,j,n},\mathbf{U}_{i_1,k}^{i,j,n} \right) \right], 
\\
\mathbf{H}_{i_1}^{y,0}=&\ \mathbf{U}_{i_1,0}^{i,j,n}-\frac{8\lambda _y}{\omega _0}\left[ \hat{\mathbf{G}}\left( \mathbf{U}_{i_1,0}^{i,j,n},\mathbf{U}_{i_1,k}^{i,j,n} \right) -\hat{\mathbf{G}}\left( \mathbf{U}_{i_1,k}^{i,j-1,n},\mathbf{U}_{i_1,0}^{i,j,n} \right) \right]. 
\end{aligned}
$$
Hence, if we have $\mathbf H\in\mathscr G$ for all $\mathbf H$, and $\mathbf U_{i_1,j_1}^{i,j,n}+2\Delta t\cdot\mathbf S_{i_1,j_1}^{i,j}\in\mathscr G$ for all $i_1,j_1$, then we can obtain $\bar{\mathbf U}^{i,j,n+1}\in\mathscr G$ by the convexity of $\mathscr G$. For the $\mathbf H$ terms,  we have that they all belong to $\mathscr G$ if
$$ \frac{8\lambda_x}{\omega_0}\alpha_{x,0}\le 1,\quad \frac{8\lambda_y}{\omega_0}\alpha_{y,0}\le 1. $$
For the source term, we omit the subscripts $i,j,i_1,j_1,n$ for brevity and define
$$\mathbf U^\star:=
\mathbf{U}+2\Delta t\cdot \mathbf{S}=\left[ \begin{array}{c}
	\rho\\
	m+4\lambda _x\rho \Theta\\
	n+4\lambda _y\rho \Xi\\
	\mathcal{E} +4\lambda _xm\Theta +4\lambda _yn\Xi\\
\end{array} \right]. 
$$
Similar to the 1D case, the density of  state $\mathbf U^\star$ is positive,  and hence $\mathbf U^\star\in\mathscr G$ if and only if its pressure is also positive, i.e.,
$$
\mathcal{E} +4\lambda _xm\Theta +4\lambda _yn\Xi -\frac{\left( m+4\lambda _x\rho \Theta \right) ^2+\left( n+4\lambda _y\rho \Xi \right) ^2}{2\rho}>0.
$$
The solution of above inequality is
\begin{equation}\label{eq:inepp2D}
\lambda _{x}^{2}\Theta ^2+\lambda _{y}^{2}\Xi ^2<\frac{1}{16\left( \gamma -1 \right) \beta}.
\end{equation}
It is easy to verify that \eqref{eq:inepp2D} holds under the CFL condition \eqref{eq:CFLPP2D}. 
\end{proof}

\begin{remark}
    We want to remark that the 2D PP limiter does not increase the total entropy as well.
\end{remark}

\section{Numerical experiments}
\label{sec5}
\setcounter{equation}{0}
\setcounter{table}{0}
\setcounter{figure}{0}

In this section, we present numerical results for the proposed method to demonstrate its high-order accuracy for smooth solutions, as well as its WB, ES, and PP properties. Unless otherwise noted, we only show results with $k = 2$. The ten-stage, fourth-order SSP-RK method developed by Ketcheson \cite{ketcheson2008highly} is used for time discretization, with time step
$$ \Delta t =  \frac{\mathrm{CFL}}{\alpha_{x,0}} \Delta x, \quad \text{or} \quad 
\Delta t =  \frac{\mathrm{CFL}}{\alpha_{x,0}/\Delta x+\alpha_{y,0}/\Delta y} ,\quad\mathrm{CFL}=0.5,$$
and hence the CFL condition for the PP property \eqref{eq:CFLPP} or \eqref{eq:CFLPP2D} is satisfied.
Note that, to demonstrate the reliability and robustness of the proposed scheme, no  slope limiters are employed in any of the tests. Our method, denoted as “WBESPP,” is compared against the following three methods to highlight its advantages in preserving simultaneously the WB, ES, and PP properties. Specifically, for the 1D case, these schemes are defined as follows:
\begin{itemize}
\item[(a)] ``non-WB" method with the semi-discrete formulation  
\begin{equation*}
\frac{\Delta x}{2}\frac{\mathrm d\mathbf U_{i_1}^i}{\mathrm dt}+\sum\limits_{l=0}^k2D_{i_1,l}\mathbf F^S(\mathbf U_{i_1}^i,\mathbf U_{l}^i)+\frac{\tau _{i_1}}{\omega_{i_1}}(\mathbf F_{i_1}^{*,i}-\mathbf F_{i_1}^i)=\frac{\Delta x}{2}\mathbf S(\mathbf U_{i_1}^i,x_i(X_{i_1})).
\end{equation*}
Note that the scheme is ES due to $(\mathbf V_{i_1}^i)^T\mathbf S(\mathbf U_{i_1}^i,x_i(X_{i_1}))=0$. 
The limiter $\Pi_h$ is applied. Hence, the non-WB scheme is ES and PP but not WB.

\item[(b)] ``non-ES" method with the semi-discrete formulation  
\begin{equation*}
\frac{\Delta x}{2}\frac{\mathrm d\mathbf U_{i_1}^i}{\mathrm dt}+\sum\limits_{l=0}^kD_{i_1,l}\mathbf F_{l}^i+\frac{\tau _{i_1}}{\omega_{i_1}}(\mathbf F_{i_1}^{*,i}-\mathbf F_{i_1}^i)=\tilde{\mathbf S}_{i_1}^{i},
\end{equation*}
where 
\begin{equation*}
    \tilde{\mathbf S}_{i_1}^i=\left( 0,\ \rho_{i_1}^i\tilde\Theta_{i_1}^i,\ m_{i_1}^i\tilde\Theta_{i_1}^i \right),\quad \tilde \Theta_{i_1}^i = \frac{1}{\rho_{i_1}^{e,i}} \sum\limits_{l=0}^k D_{i_1,l} F_{2,l}^{i_1}.
\end{equation*}
The PP limiter $\Pi_h$ is employed. The non-ES scheme is WB and PP but not ES.  

\item[(c)] ``non-PP" method with the same  formulation as  \eqref{eq:nodalES}. 
While the PP limiter $\Pi_h$ is not used. Hence, the non-PP scheme is WB and ES but not PP. 

\end{itemize}

\subsection{One-dimensional tests}

\ 

\textit{Example} 5.1 (Well-balancedness test.) In this example, we consider different steady states and test the well-balanced property of the proposed scheme. We take $\gamma = 5/3$ and $\Omega = [0,2]$ in this example. Under the linear gravitational potential $\phi_x=1$, we consider both the isothermal and the isentropic steady state solutions, which are respectively given by
\begin{equation*}
\begin{aligned}
    \text{Eqbm1:}\quad & \rho ^e(x)=\exp\left(-x\right),\ p^e(x)=\exp \left( -x \right),\\
    \text{Eqbm2:}\quad & 
\rho ^e(x)=\left( \rho _{0}^{\gamma -1}-\frac{1}{K_0}\frac{\gamma -1}{\gamma}gx \right) ^{\frac{1}{\gamma -1}},\ p^e(x)=K_0\left( \rho ^e(x) \right) ^{\gamma}.
\end{aligned}
\end{equation*}
The parameters for Eqbm2 are set as $\rho_0=1,\ g=1,\ K_0=1$. One can easily check that both of them satisfy \eqref{eq:eqbm} and will lead to the exact balance of the flux and the source term. 

First, we take the steady state solution as initial condition and compute the solution until time $T=4$. 
In Tab. \ref{tabwb}, we present the errors of density of different schemes. It is notable that these errors are calculated between the numerical solutions $\mathbf U_h^n$ and the interpolating polynomials of steady state solutions $\mathbf U_h^e$. We can see that errors of our WBESPP scheme are all at the level
of round-off error both for the isothermal equilibrium and the polytropic equilibrium, demonstrating the desired WB property independent of the specific type of the equilibrium.
While the non-WB scheme shows the
expected third order accuracy of the scheme.

Next, we add a small perturbation to Eqbm2 at the left boundary $x=0$ to
show the advantage of WB scheme in simulating the evolution
of such small perturbation. The perturbation is added on the velocity, which is given by
$ u(0,t) = 10^{-6} \sin(4\pi t).$
The solutions are computed until time
$T=1.5$, at which point the waves have not yet propagated to the right boundary. In Fig. \ref{figWB}, we present the pressure perturbation and velocity on $N=200$ meshes against the reference solution computed by the WBPP method \cite{du2024well} on $N=2000$ meshes. 
One can see that the results with the WB scheme agree well with the reference solution, while the results by the non-WB scheme has a large error especially for $x>1.5$. Thus indicates that the WB method is more accurate for resolving small perturbation to steady states.\\

\begin{table}[htb!]
        \centering
        \caption{Example 5.1: One-dimensional well-balancedness test. Errors and orders of density at final time $T = 4$ for $k=2$.}
	\setlength{\tabcolsep}{2.1mm}{
		\begin{tabular}{|c|c|cc|cc|cc|}
			\hline 
   &$N$ & $L^1$ error & order & $L^2$ error & order & $L^\infty$ error & order  \\ \hline
   \multicolumn{8}{|c|}{WBESPP}\\ \hline
   \multirow{4}{*}{Eqbm1}
     &20 &1.67e-15 & -- &2.09e-15 & -- &5.55e-15 &-- \\ 
     &40 &3.10e-15 & -- &3.95e-15 & -- &1.11e-14 &-- \\
     &80 &5.42e-15 & -- &7.04e-15 & -- &2.08e-14 &-- \\
    &160 &1.41e-14 & -- &1.66e-14 & -- &5.96e-14 &-- \\
			\hline
   \multirow{4}{*}{Eqbm2} 
     &20 &1.56e-15 & -- &2.57e-15 & -- &9.21e-15 &-- \\ 
     &40 &4.43e-15 & -- &6.34e-15 & -- &2.13e-14 &-- \\
     &80 &8.08e-15 & -- &1.10e-14 & -- &3.54e-14 &-- \\
    &160 &1.56e-14 & -- &2.20e-14 & -- &6.94e-14 &-- \\
    \hline 
    \multicolumn{8}{|c|}{non-WB}\\ \hline
   \multirow{4}{*}{Eqbm1}
     &20 &7.01e-06 & -- &9.02e-06 & -- &3.30e-05 &-- \\
     &40 &9.03e-07 & 2.96 &1.17e-06 & 2.95 &4.44e-06 &2.89 \\ 
     &80 &1.15e-07 & 2.98 &1.49e-07 & 2.97 &5.77e-07 &2.95 \\
    &160 &1.44e-08 & 2.99 &1.88e-08 & 2.99 &7.35e-08 &2.97 \\
			\hline
   \multirow{4}{*}{Eqbm2}
     &20 &3.34e-06 & --&3.75e-06 & -- &9.44e-06 &-- \\
     &40 &4.36e-07 & 2.94 &4.95e-07 & 2.92 &1.29e-06 &2.87 \\
     &80 &5.58e-08 & 2.97 &6.36e-08 & 2.96 &1.69e-07 &2.93 \\
    &160 &7.05e-09 & 2.98 &8.07e-09 & 2.98 &2.17e-08 &2.96 \\    
    \hline
	\end{tabular}} 
    \label{tabwb}
\end{table}

\begin{figure}[htbp!]
	\centering
    \subfigure[Pressure perturbation.]{
		\includegraphics[width=0.4\linewidth]{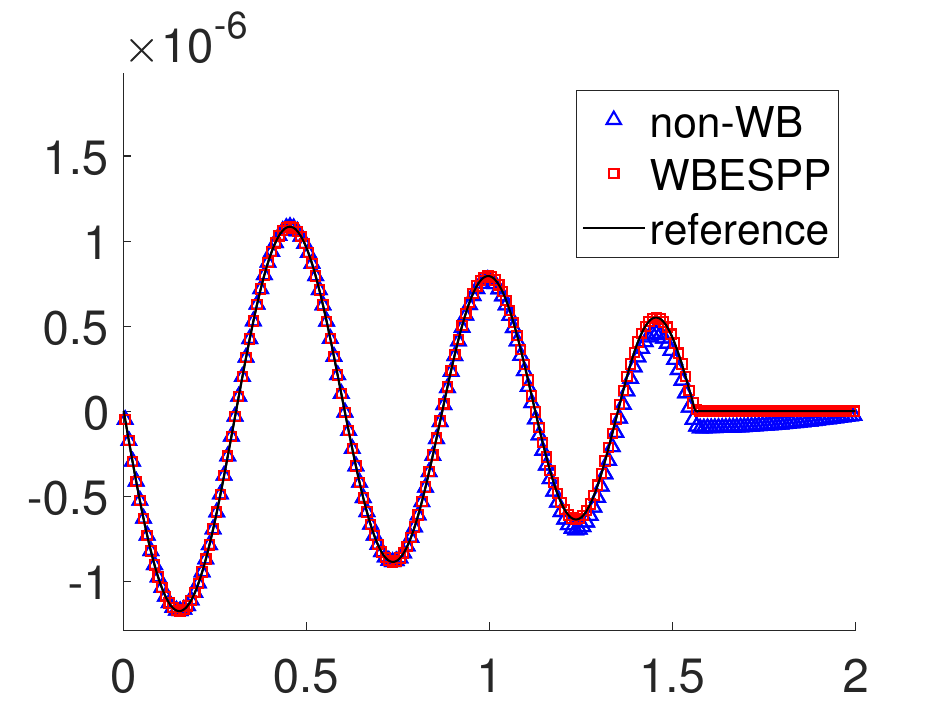}}
	\subfigure[Velocity.]{
		\includegraphics[width=0.4\linewidth]{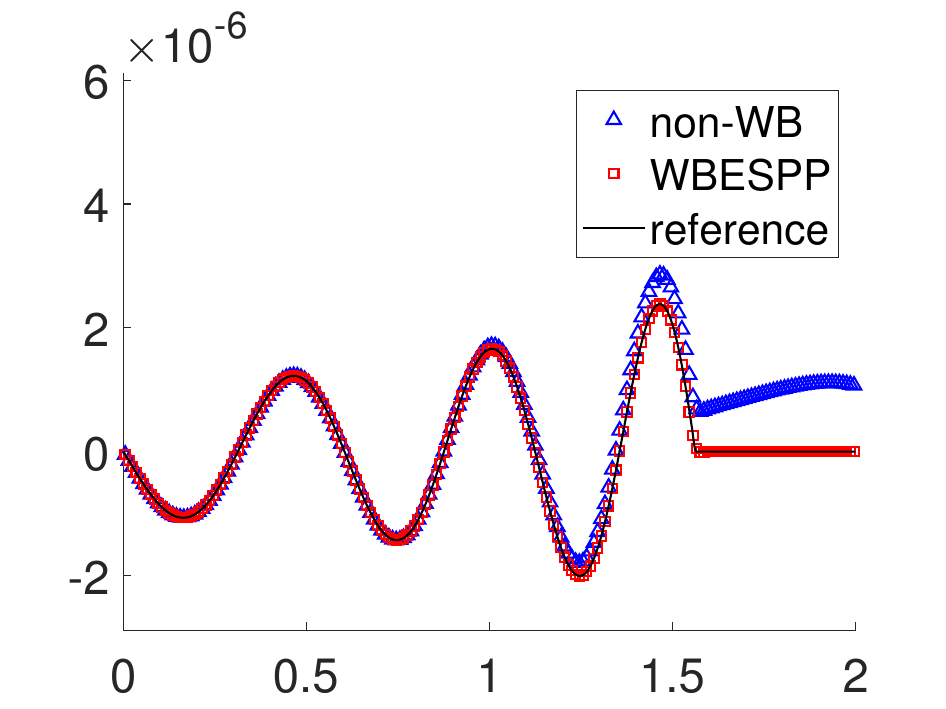}}
	\caption{Example 5.1: One-dimensional well-balancedness test with small velocity perturbation. The numerical solution at $T = 1.5$ on $N=200$ meshes.}
   \label{figWB}
\end{figure}

\textit{Example} 5.2 (Sod shock tube.) 
Here, we test the 1D Sod-like shock tube problem with the gravitational potential $\phi_x = 1$, which is a standard test example for Euler equation, and is originally introduced in \cite{sod1978survey}. The computational domain is taken as $\Omega = [-1,1]$ with reflective boundaries on both sides, and the initial data is given by
$$
\left( \rho ,u,p \right) =\begin{cases}
	\left( 1,0,1 \right) , & x<0,\\
	\left( 0.125,0,0.1 \right) , & x\ge 0.\\
\end{cases}
$$
In Fig. \ref{figSod}, we present the result of density at $T = 0.4$ on $N=200$ meshes. The reference solution is computed by the WBPP method \cite{du2024well} with TVB limiter on $N=1200$ meshes. 
Due to the entropy stability, we can see the discontinuities are captured sharply and stably, although we do not add the shock limiter. 
While the computation will break down before the final time without ES treatment. In Fig. \ref{figSodU}, we present the time evolution of total entropy for our WBESPP scheme and non-ES scheme. It can be observed that the ES treatment indeed dissipates the total entropy.\\

\begin{figure}[htbp!]
	\centering
    \subfigure[Density.]{
		\includegraphics[width=0.31\linewidth]{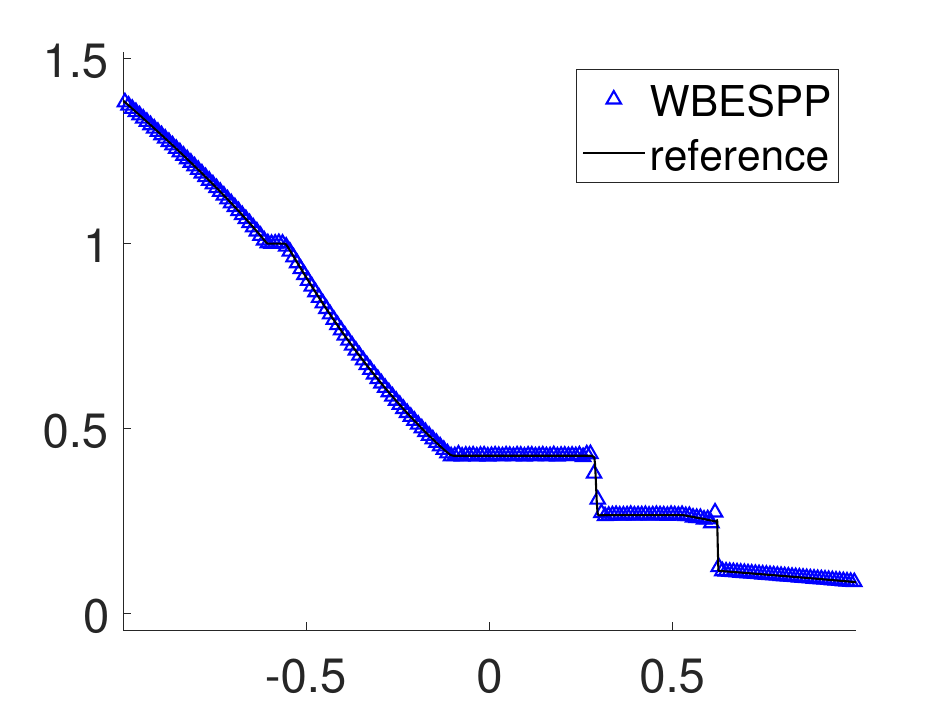}}
	\subfigure[Velocity.]{
		\includegraphics[width=0.31\linewidth]{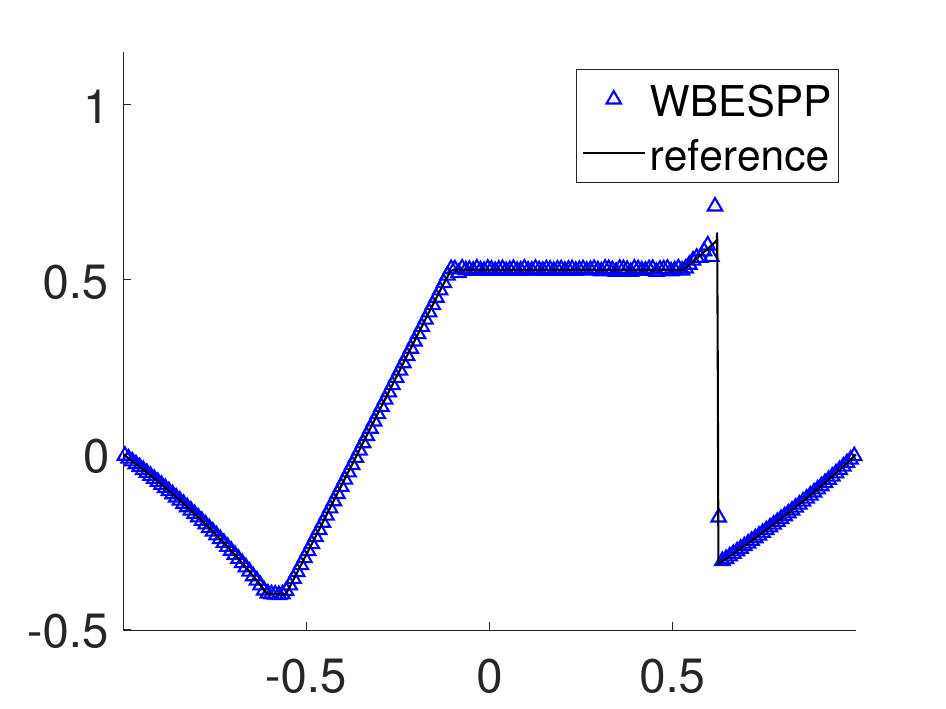}}
    \subfigure[Pressure.]{
		\includegraphics[width=0.31\linewidth]{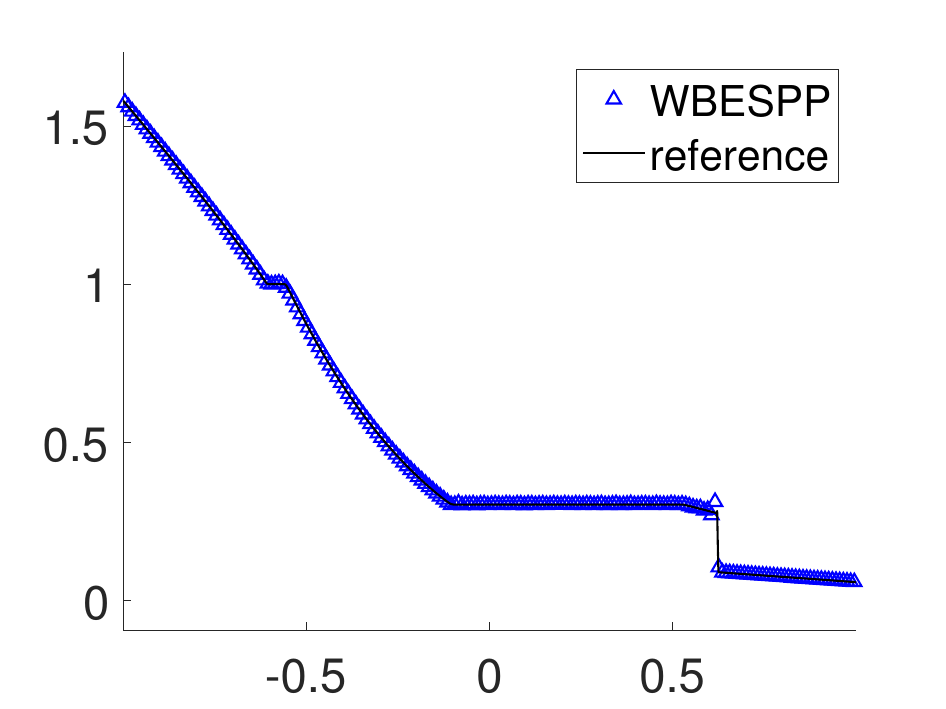}}
	\caption{Example 5.2: One-dimensional Sod-like shock tube. The numerical solution at $T = 0.4$ on $N=200$ meshes.}
   \label{figSod}
\end{figure}

\begin{figure}[htbp!]
	\centering
    \subfigure[non-ES.]{
		\includegraphics[width=0.4\linewidth]{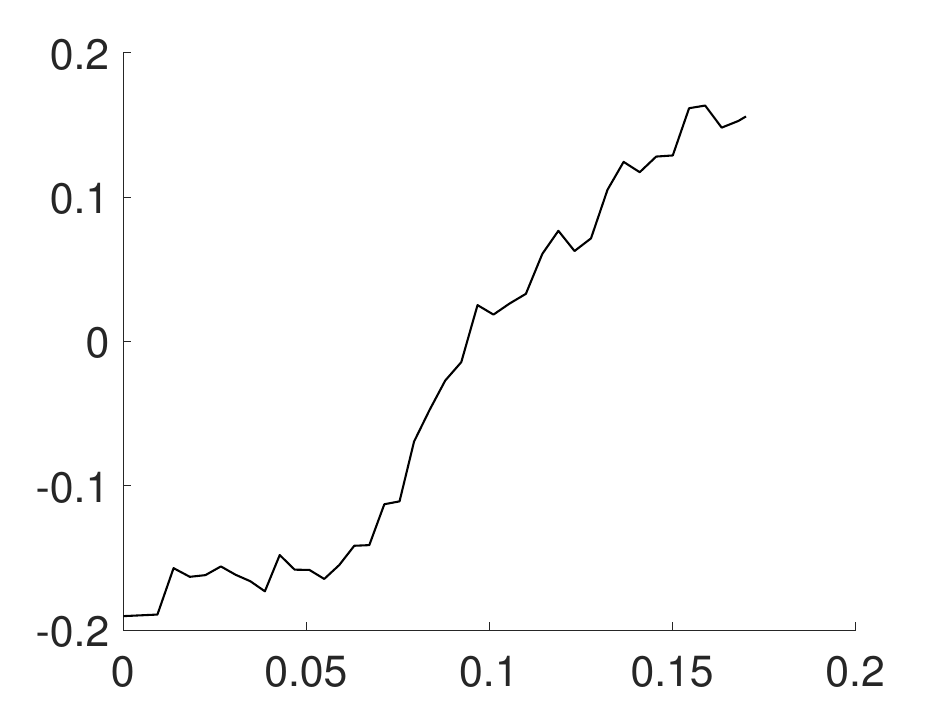}}
	\subfigure[WBESPP.]{
		\includegraphics[width=0.4\linewidth]{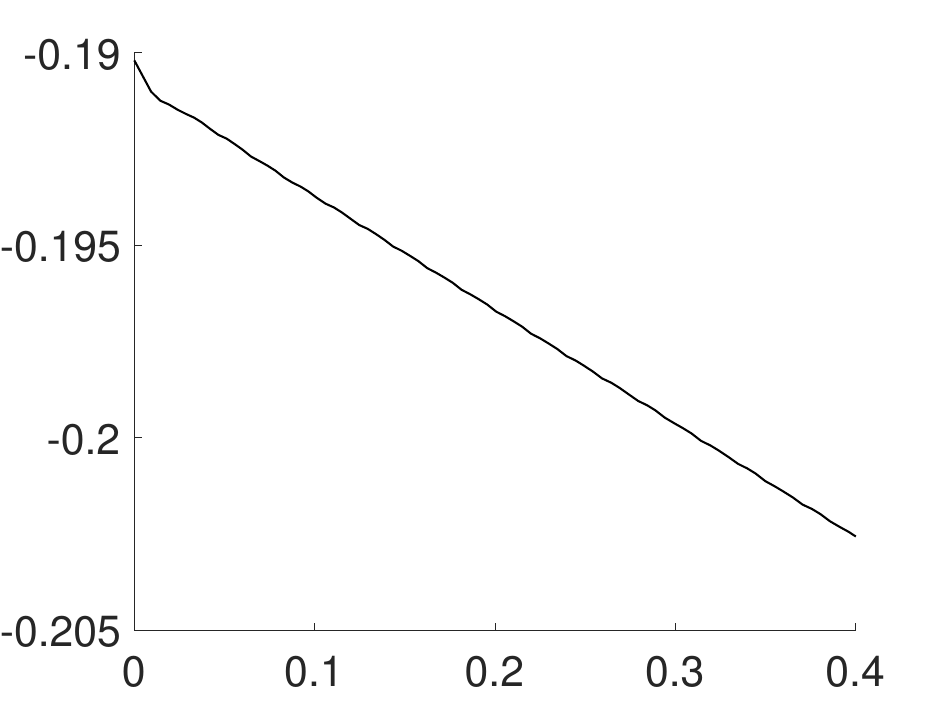}}
	\caption{Example 5.2: One-dimensional Sod-like shock tube. The evolution of total entropy on $N=200$ meshes. }
   \label{figSodU}
\end{figure}

\textit{Example} 5.3 (Double rarefaction.) 
We consider the double rarefaction problem with the gravitational potential $\phi_x=x$. This example includes extreme low density and pressure. The computational domain is taken as $\Omega = [-1,1]$ with outflow boundaries, and the initial data is given as
$$
\left( \rho ,u,p \right) =\begin{cases}
	\left( 7,-1,0.2 \right) , & x<0,\\
	\left( 7,1,0.2 \right) , & x\ge 0.\\
\end{cases}
$$
We run the simulation until $T=0.6$ on $N=800$ meshes, and the results are shown in Fig. \ref{figDRF}. The reference solution is computed by the WBPP method \cite{du2024well} on $N=1600$ meshes. 
We can see that the structure of solution is resolved well, while the density and pressure are preserved to be positive. We want to remark that the computation will break down at the first step without the PP treatment since the negative density and pressure are introduced.\\

\begin{figure}[htbp!]
	\centering
    \subfigure[Density.]{
		\includegraphics[width=0.31\linewidth]{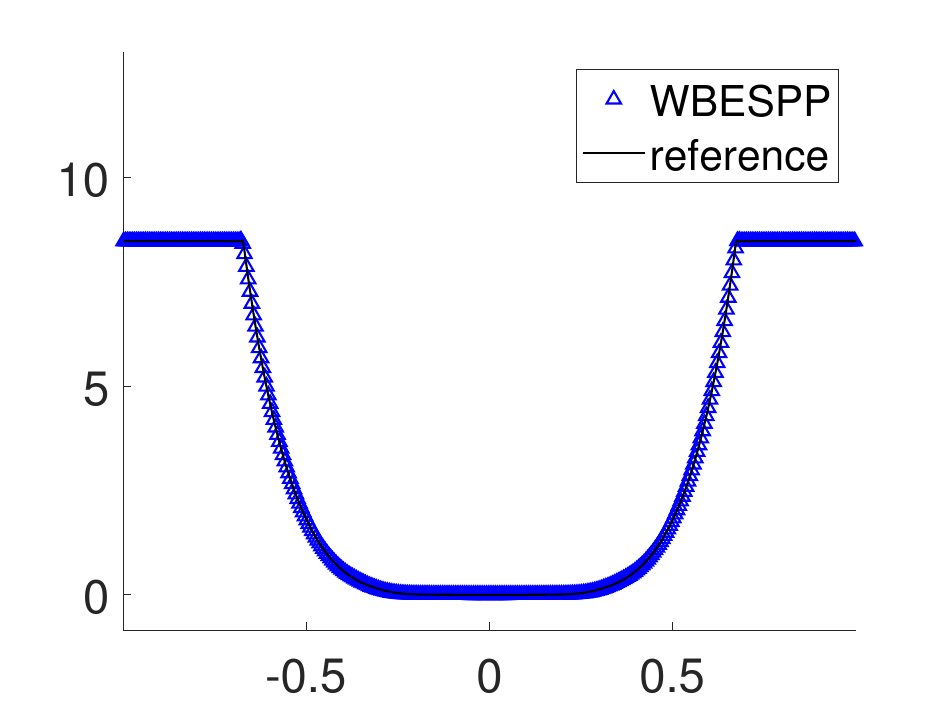}}
	\subfigure[Momentum.]{
		\includegraphics[width=0.31\linewidth]{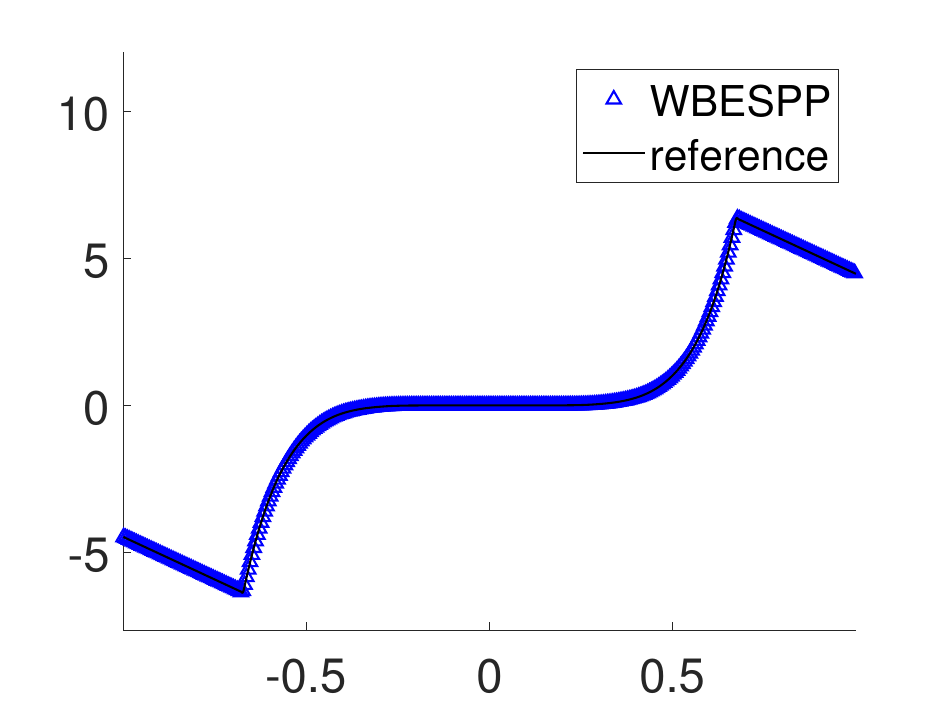}}
	\subfigure[Energy.]{
		\includegraphics[width=0.31\linewidth]{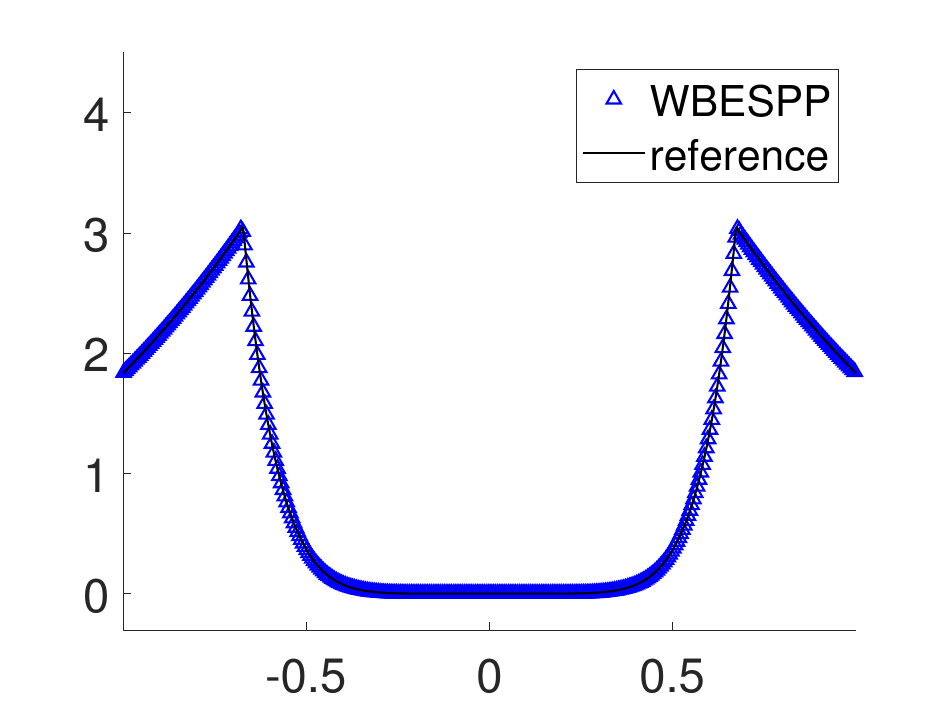}}
	\caption{Example 5.3: One-dimensional double rarefaction problem. The numerical solution at $T = 0.6$ on $N=800$ meshes.}
   \label{figDRF}
\end{figure}

\subsection{Two-dimensional tests}

\ 

\textit{Example} 5.4 (Well-balancedness test.) We consider the two-dimensional isothermal steady state solution in \cite{xing2013high} under the linear gravitational potential $\phi_x=\phi_y=g$. The exact steady state solution is given by
$$
\rho ^e=\rho _0\exp \left( -\frac{\rho _0g}{p_0}\left( x+y \right) \right) ,\quad \mathbf{u}^e=\mathbf{0},\quad p^e=p_0\exp \left( -\frac{\rho _0g}{p_0}\left( x+y \right) \right) ,
$$
where the parameters are $\rho_0=1.21,\ p_0=1,\ g=1$. The computational domain is taken as $\Omega=[0,1]\times[0,1]$.  

We first take the initial data as the steady state solution and compute the solution until $T=1$. In Tab. \ref{tabwb2D1}, we present the errors of density, demonstrating that our proposed WBESPP scheme also maintains the balance errors at machine level for the 2D problem.

Next, we add a small perturbation on pressure
$$ p=p^e+0.001 \exp\left( -100((x-0.3)^2+(y-0.3)^2) \right). $$
In Fig. \ref{figWB2D}, we present the results of density perturbation and pressure perturbation at $T=0.15$ on mesh with $N_x\times N_y=100\times 100$. It is observed that the non-WB scheme can not capture those small perturbations well, while the WB method resolves them accurately.\\

\begin{table}[htb!]
        \centering
        \caption{Example 5.4: Two-dimensional well-balancedness test. Errors and orders of density at final time $T = 1$.}
	\setlength{\tabcolsep}{2.1mm}{
		\begin{tabular}{|c|c|cc|cc|cc|}
			\hline 
   &$N_x=N_y$ & $L^1$ error & order & $L^2$ error & order & $L^\infty$ error & order  \\ \hline
   \multirow{4}{*}{WBESPP}
     &20 &7.08e-15 & -- &8.05e-15 & -- &2.80e-14 &-- \\
     &40 &1.40e-14 & -- &1.56e-14 & -- &5.88e-14 &-- \\
     &80 &2.81e-14 & -- &3.13e-14 & -- &1.12e-13 &-- \\
    &160 &5.72e-14 & -- &6.38e-14 &-- &2.19e-13 &-- \\

			\hline
   \multirow{4}{*}{non-WB}
     &20 &1.31e-06 & -- &1.53e-06 & -- &9.10e-06 &-- \\
     &40 &1.65e-07 & 2.98 &1.93e-07 & 2.98 &1.21e-06 &2.91 \\
     &80 &2.08e-08 & 2.99 &2.43e-08 & 2.99 &1.58e-07 &2.94 \\
    &160 &2.60e-09 & 3.00 &3.05e-09 & 3.00 &2.01e-08 &2.97 \\

			\hline
	\end{tabular}} 
    \label{tabwb2D1}
\end{table}

\begin{figure}[htb!]
	\centering
    \subfigure[Density perturbation, non-WB.]{
		\includegraphics[width=0.4\linewidth]{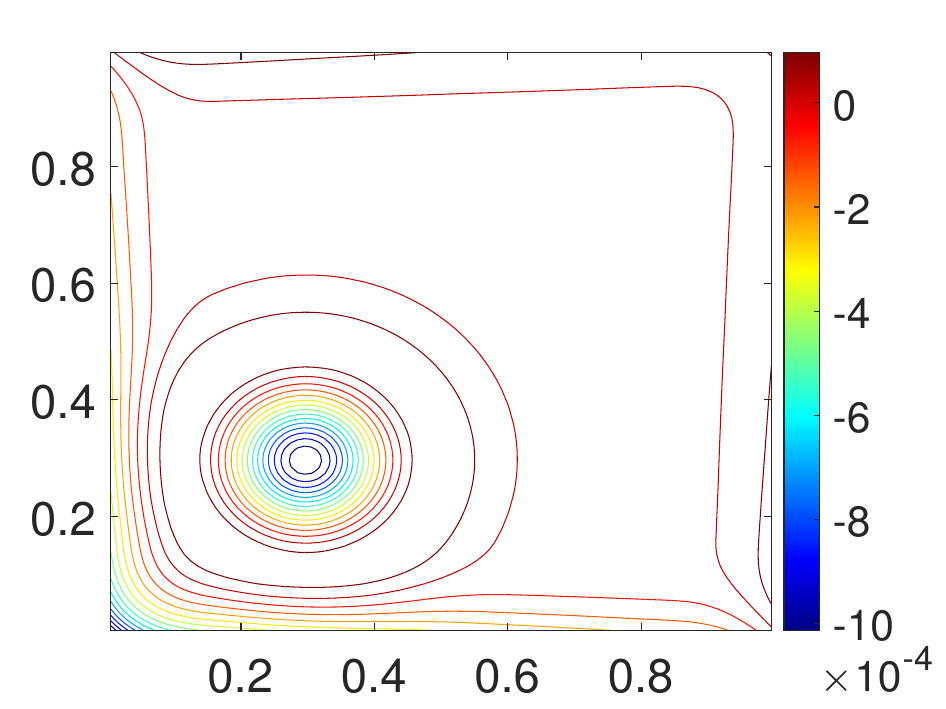}}
	\subfigure[Pressure perturbation, non-WB.]{
		\includegraphics[width=0.4\linewidth]{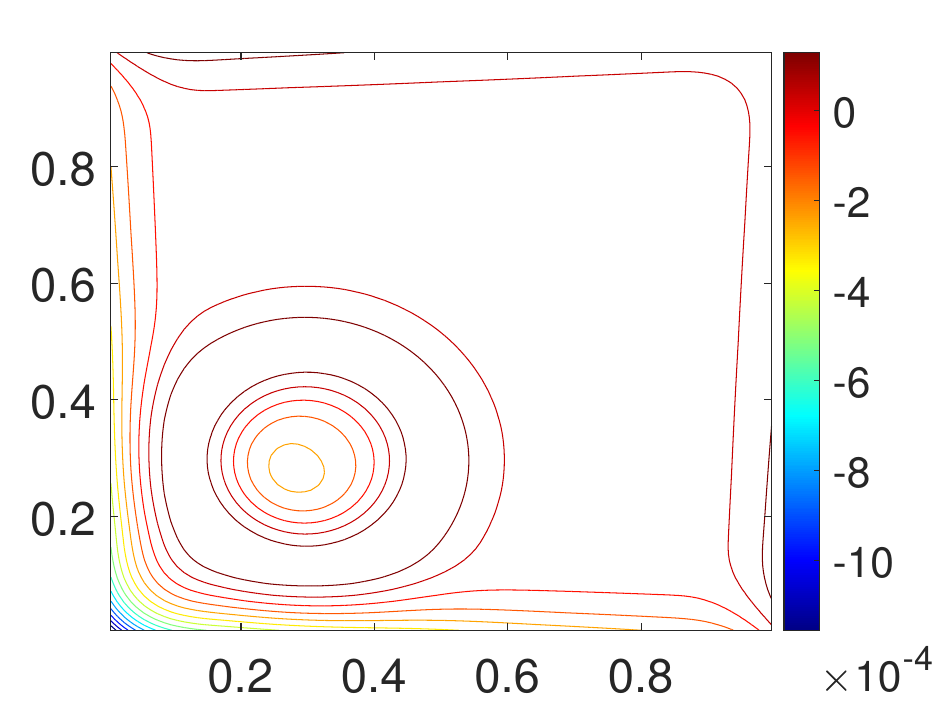}}
    \subfigure[Density perturbation, WBESPP.]{
		\includegraphics[width=0.4\linewidth]{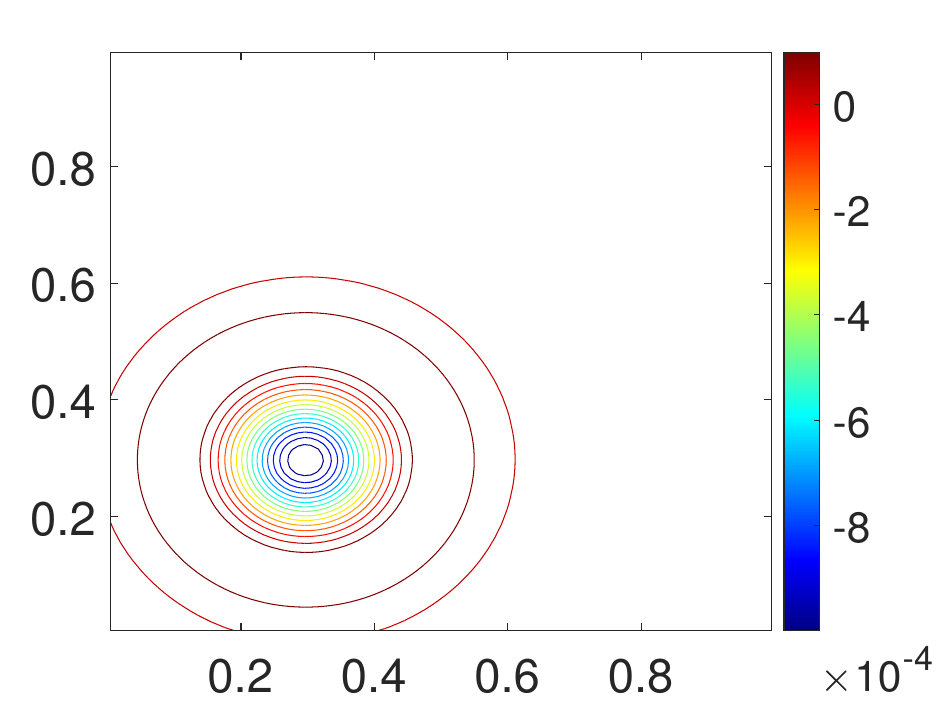}}
	\subfigure[Pressure perturbation, WBESPP.]{
		\includegraphics[width=0.4\linewidth]{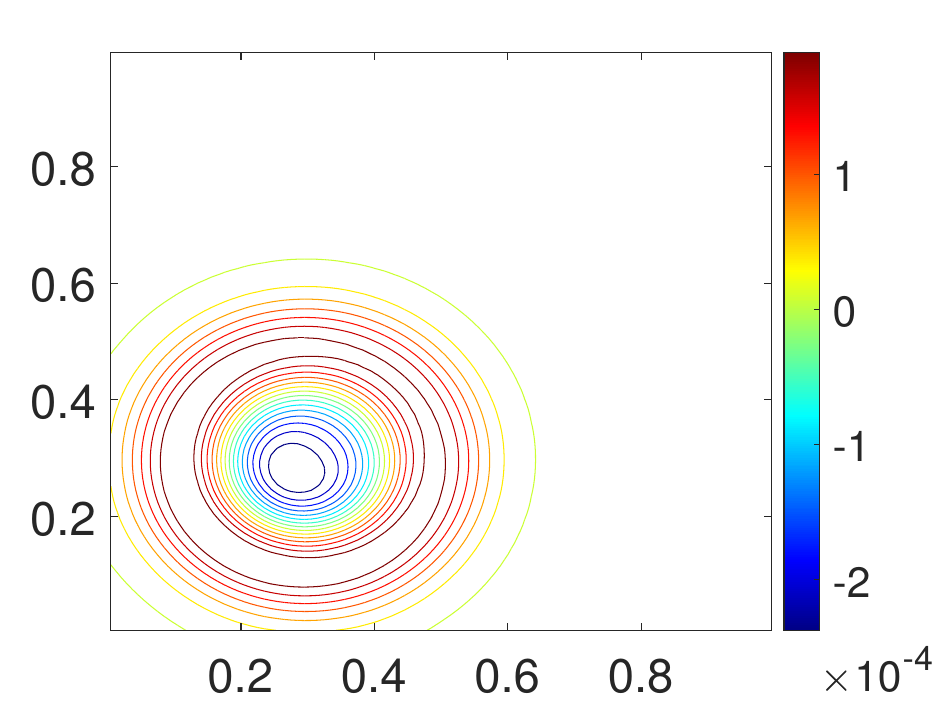}}
	\caption{Example 5.4: Two-dimensional well-balancedness test. The numerical solution at $T = 0.15$ on $N_x\times N_y=100\times 100$ meshes. 15 contour lines are used.}
   \label{figWB2D}
\end{figure}

\textit{Example} 5.5 (Accuracy test.) 
In this example, we test the accuracy of the scheme with the gravitational potential $\phi_x=\phi_y=1$. The computational domain is taken as $\Omega=[0,2\pi]\times[0,2\pi]$, and the exact smooth solution is given by
$$
\rho =1+0.2\sin \left( x+y-2t \right),\quad u=v=1,\quad p=20-x-y+2t+0.2\cos(x+y-2t).
$$
We run the simulation up to $T = 0.5$. In Tab. \ref{tabwb2D2}, we present the errors and orders of density for $k = 1,2,3$, and the optimal $(k+1)$-th convergence rates are observed.\\

\begin{table}[htb!]
        \centering
        \caption{Example 5.5: Two-dimensional accuracy test. Errors and orders of density at final time $T = 0.5$.}
	\setlength{\tabcolsep}{2.1mm}{
		\begin{tabular}{|c|c|cc|cc|cc|}
			\hline 
   &$N_x=N_y$ & $L^1$ error & order & $L^2$ error & order & $L^\infty$ error & order  \\ \hline 
   \multirow{4}{*}{$k=1$}
     &20 &2.73e-03 & -- &3.16e-03 & -- &6.69e-03 &-- \\
     &40 &7.00e-04 & 1.96 &8.06e-04 & 1.97 &1.94e-03 &1.79 \\
     &80 &1.76e-04 & 1.99 &2.02e-04 & 2.00 &5.20e-04 &1.90 \\
    &160 &4.40e-05 & 2.00 &5.03e-05 & 2.00 &1.34e-04 &1.96 \\
			\hline
   \multirow{4}{*}{$k=2$}
     &20 &2.77e-04 & -- &3.53e-04 & -- &1.25e-03 & -- \\
     &40 &5.21e-05 & 2.41 &6.65e-05 & 2.41 &2.33e-04 &2.43 \\
     &80 &8.30e-06 & 2.65 &1.06e-05 & 2.65 &3.63e-05 &2.68 \\
    &160 &1.16e-06 & 2.84 &1.48e-06 & 2.84 &4.95e-06 &2.87 \\
    \hline
  \multirow{4}{*}{$k=3$}
     &20 &1.51e-06 & -- &2.07e-06 & -- &1.74e-05 &-- \\
     &40 &7.61e-08 & 4.31 &1.05e-07 & 4.30 &9.16e-07 &4.25 \\
     &80 &5.09e-09 & 3.90 &7.10e-09 & 3.89 &6.25e-08 &3.87 \\
    &160 &2.47e-10 & 4.36 &4.36e-10 & 4.02 &3.12e-09 &4.32 \\
   \hline
	\end{tabular}} 
    \label{tabwb2D2}
\end{table}

\textit{Example} 5.6 (Double rarefaction.) Here, we consider the two-dimensional double rarefaction test in \cite{jiang2022positivity}, which is used to demonstrate the positivity-preserving property of a scheme. The computational domain is $\Omega=[-0.5,0.5]\times[-0.5,0.5]$ with the gravitational potential $\phi=0.5(x^2+y^2)$. The initial data is given by
$$
\rho =\exp \left( -\phi \left( x,y \right) /0.4 \right) ,\quad p=0.4\exp \left( -\phi \left( x,y \right) /0.4 \right), 
$$
$$
u=\left\{\begin{array}{rr}
	-2,&x\le 0,
	 \\2,&x>0,
\end{array}\right. \quad v=0.
$$
We run the simulation until $T=0.1$ on $N_x\times N_y=200\times 200$ meshes, and the results are shown in Fig. \ref{figDRF2D}. It can be seen that the WBESPP scheme can compute this problem stably. We note that for the non-PP scheme, the computation will break down at first several steps.\\

\begin{figure}[htbp!]
	\centering
    \subfigure[Density.]{
		\includegraphics[width=0.31\linewidth]{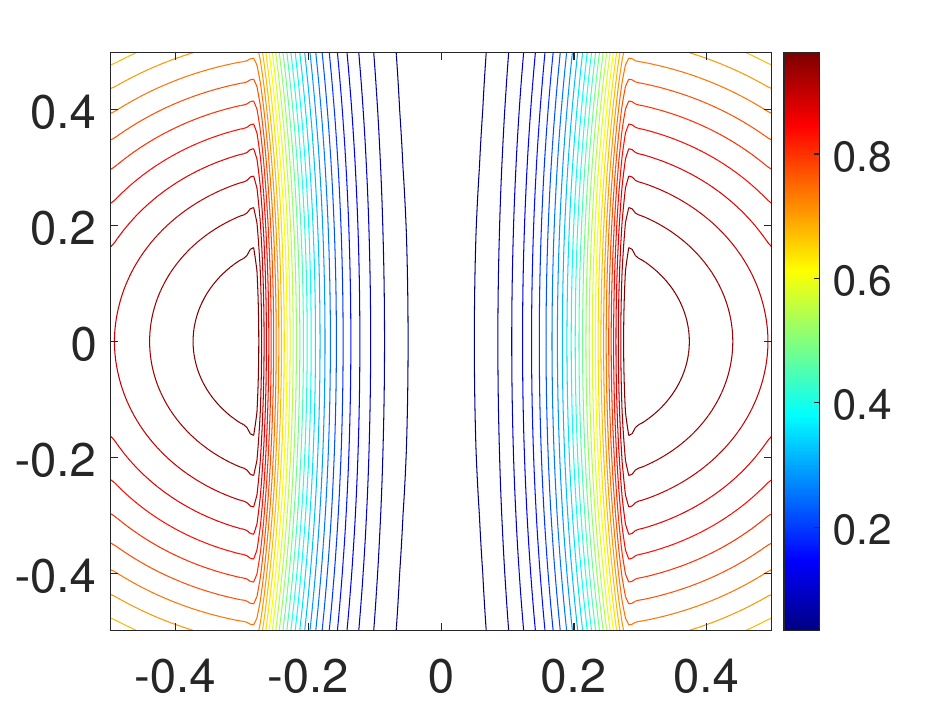}}
	\subfigure[$m=\rho u$.]{
		\includegraphics[width=0.31\linewidth]{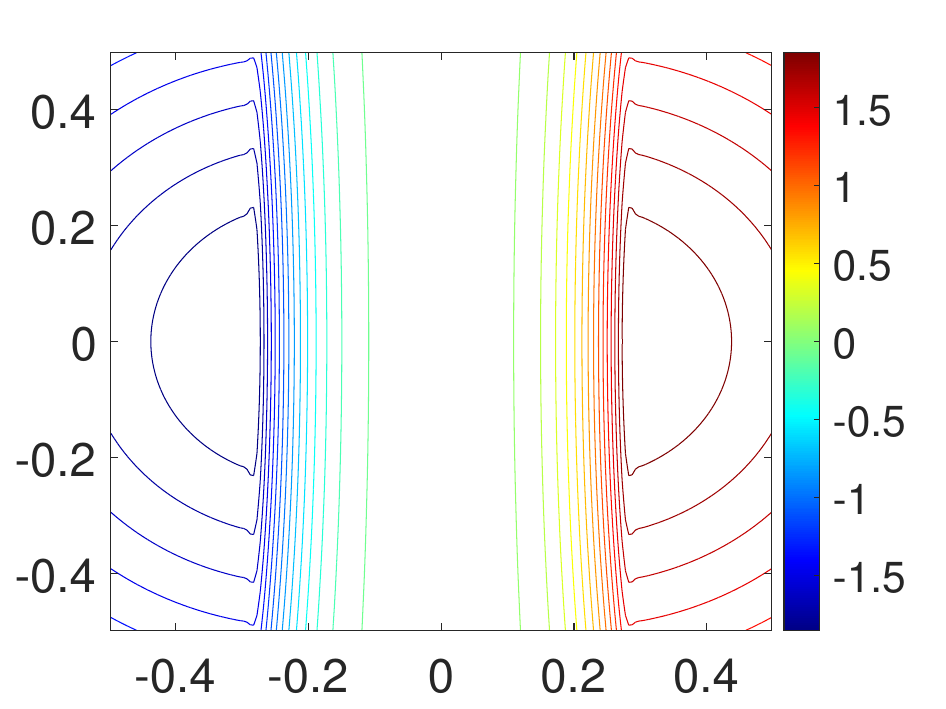}}
    \subfigure[Pressure.]{
		\includegraphics[width=0.31\linewidth]{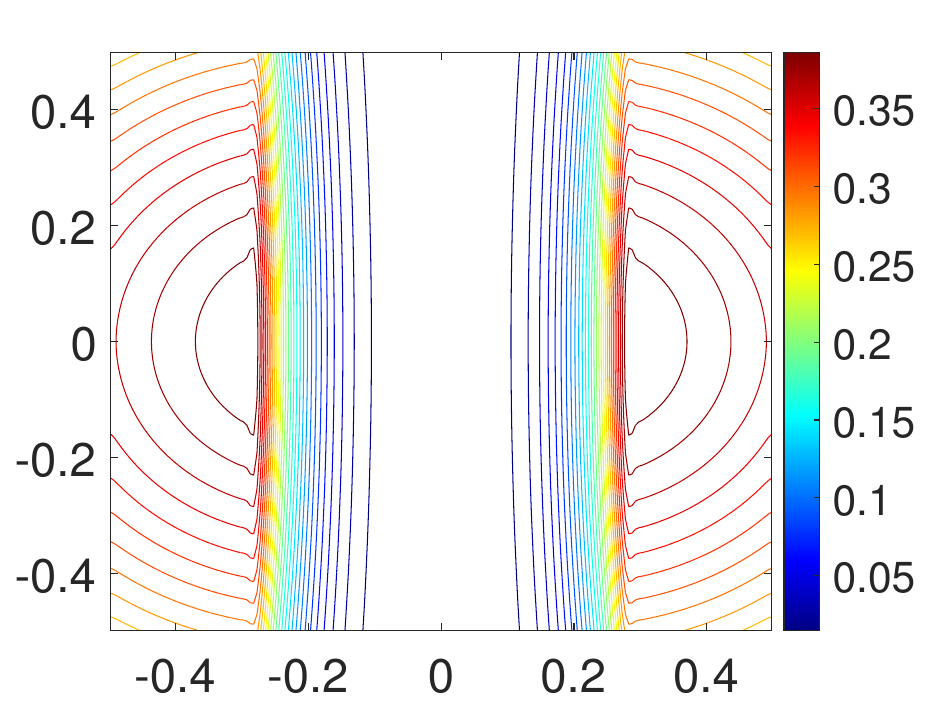}}
	\caption{Example 5.6: Two-dimensional double rarefaction test. The numerical solution at $T = 0.1$ on $N_x\times N_y=200\times 200$ meshes. 30 contour lines are used.}
   \label{figDRF2D}
\end{figure}

\textit{Example} 5.7 (Radial Rayleigh-Taylor instability.) In this example, we simulate the radial Rayleigh-Taylor instability problem with the  gravitational potential $\phi=r=\sqrt{x^2+y^2}$. The initial data is
$$
\rho =\left\{\begin{array}{ll}
	\exp \left( -r \right) ,&r<r_0,\\
	\exp \left( -\frac{r}{\alpha}+r_0\frac{1-\alpha}{\alpha} \right) ,&r>r_0,\\
\end{array}\right.\quad p=\left\{\begin{array}{ll}
	\exp \left( -r \right) , & r<r_i,\\
	\frac{1}{\alpha}\exp \left( -\frac{r}{\alpha}+r_0\frac{1-\alpha}{\alpha} \right) , & r>r_i,\\
\end{array}\right.
$$
where the parameters are given by $$r_i=\eta(1+\cos(k\theta)),\quad \alpha=\exp(r_0)/(\exp(r_0)+\Delta_\rho),\quad \theta=\arctan(y/x),$$
and $r_0=6,\ \eta=0.02,\ \Delta_\rho=0.1,\ k=20$. The equilibrium state is set to $\rho^e=p^e=\exp(-r)$. We can see that in the regions $r < r_0(1-\eta)$ and $r>r_0(1+\eta)$, the initial condition is
in stable equilibrium. But due to the discontinuous density, a Rayleigh-Taylor instability will develop
near the interface $r=r_i(\theta)$ at $t\approx 2.5$. Meanwhile, the solution is closed to the equilibrium state at the location away from the interface.

In Fig. \ref{figRT}, we plot the result of density perturbation at $T=2.9,\ 3.8,\ 5$ on $N_x\times N_y=240\times 240$ meshes. One can see that both two schemes produce relatively accurate results with the instabilities concentrated near the location of the interface. Thanks to the ES property, the computation is stable without applying the shock limiter. Moreover, it is notable that the computation will break down at $t=2.41$ without the ES treatment.

However, it seems that there are some non-physical structures appearing in the central area by non-WB scheme. The enlarged results at $T=5$ near the center are shown in Fig. \ref{figRT2}. It can be seen that the perturbation of the non-WB scheme is about $10^{-3}$ level 
without the ring structure,  while the level is $10^{-4}$ for WBESPP scheme, 
indicates that our scheme can better retain the equilibrium solution away from the interface.\\

\begin{figure}[htbp!]
	\centering
    \subfigure[$T=2.9$, non-WB.]{
		\includegraphics[width=0.31\linewidth]{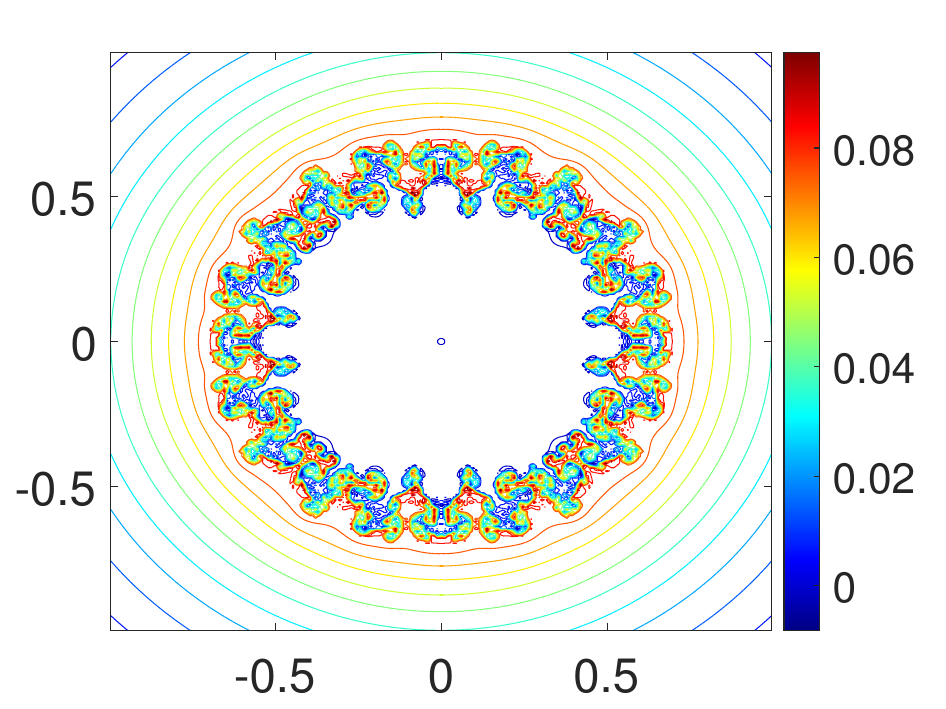}}
    \subfigure[$T=3.8$, non-WB.]{
		\includegraphics[width=0.31\linewidth]{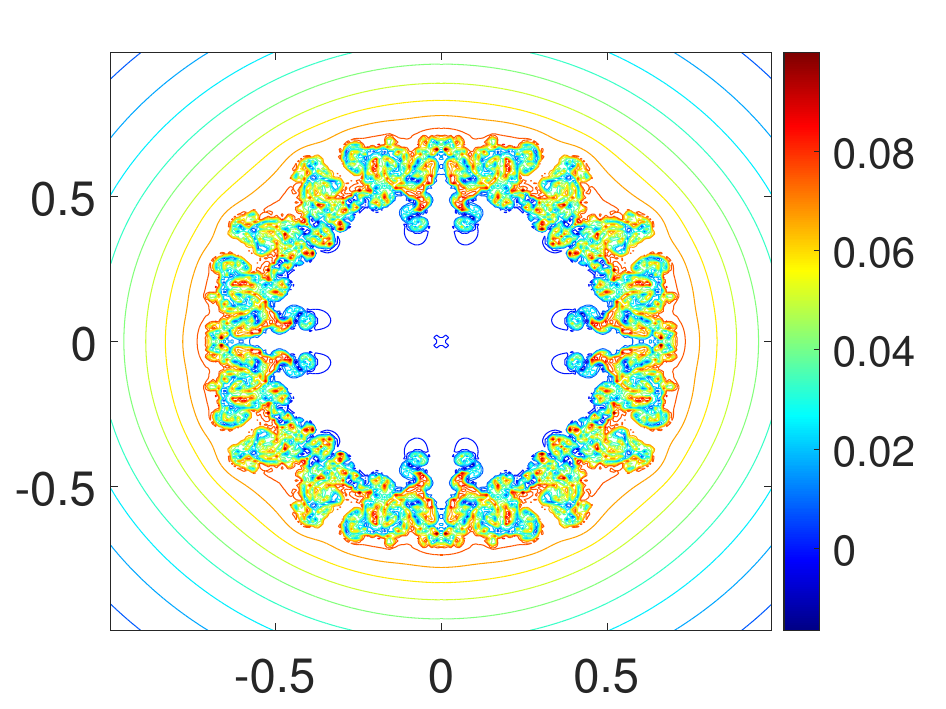}}
    \subfigure[$T=5$, non-WB.]{
		\includegraphics[width=0.31\linewidth]{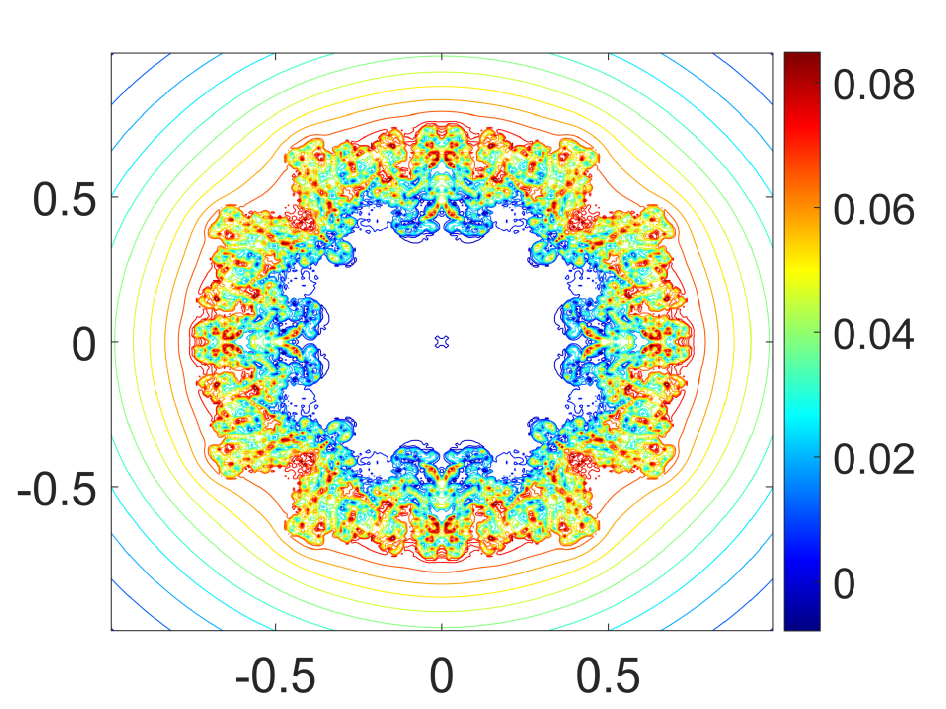}}
    \subfigure[$T=2.9$, WBESPP.]{
		\includegraphics[width=0.31\linewidth]{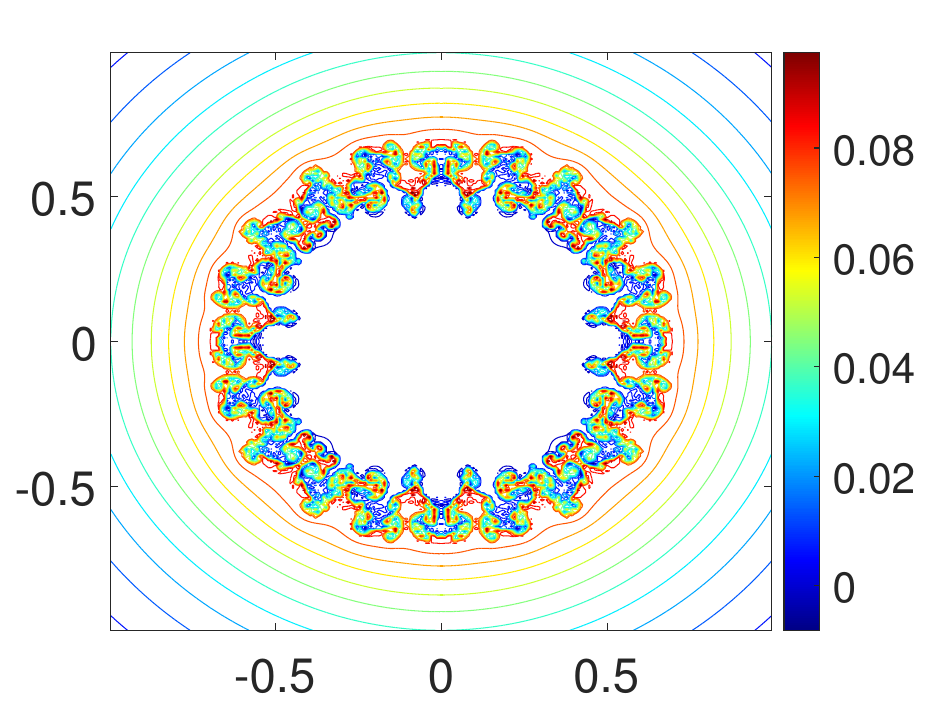}}
    \subfigure[$T=3.8$, WBESPP.]{
		\includegraphics[width=0.31\linewidth]{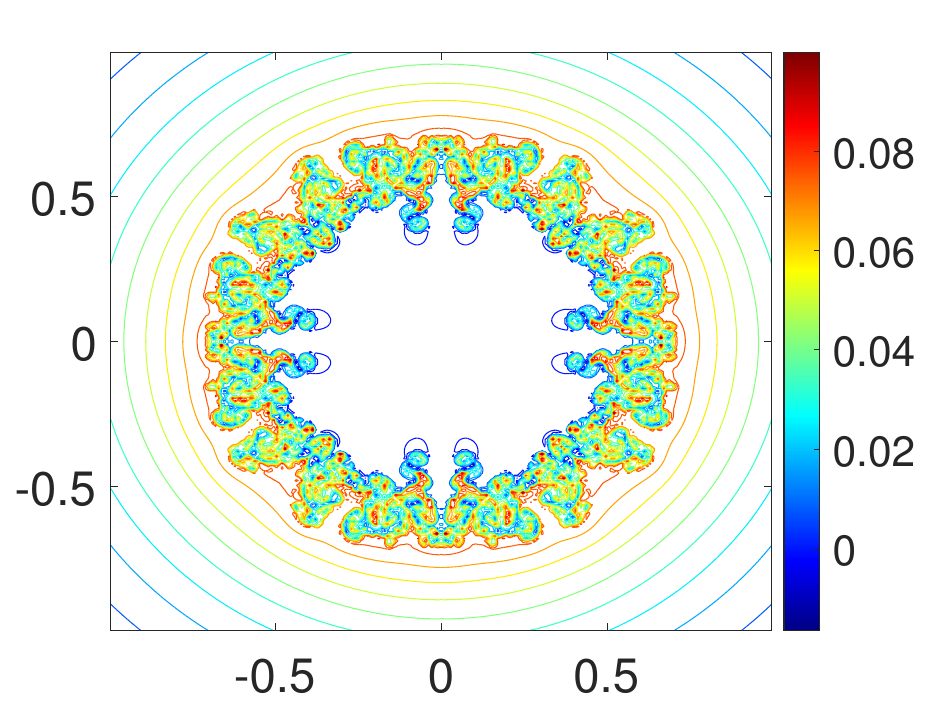}}
	\subfigure[$T=5$, WBESPP.]{
		\includegraphics[width=0.31\linewidth]{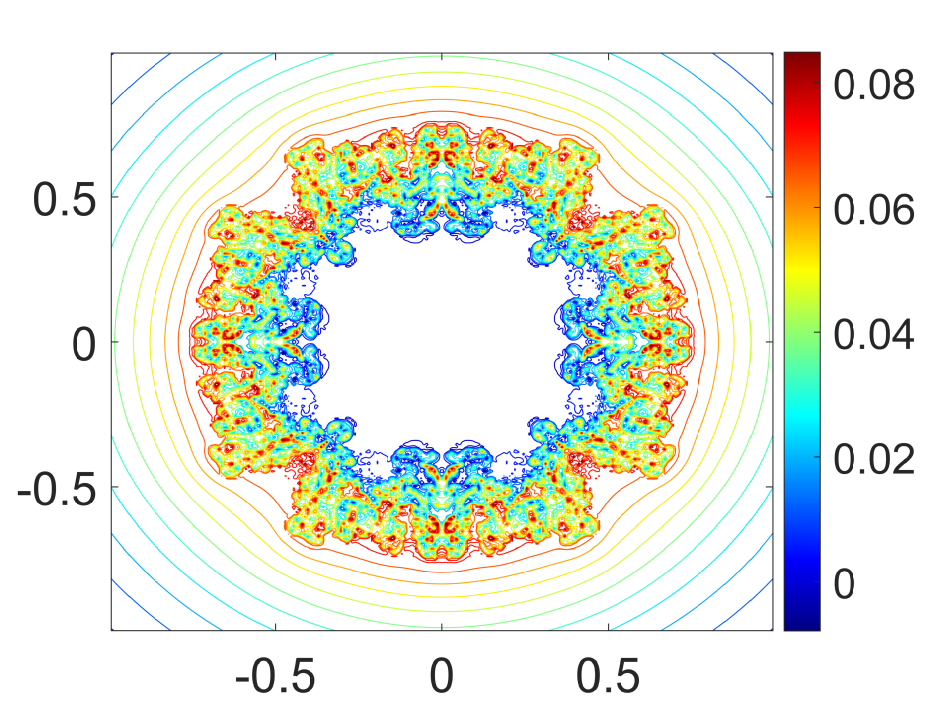}}
	\caption{Example 5.7: Two-dimensional radial Rayleigh-Taylor instability. The density perturbation on $N_x\times N_y=240\times 240$ meshes. 10 contour lines are used.}
   \label{figRT}
\end{figure}

\begin{figure}[htbp!]
	\centering
    	\subfigure[non-WB from -5.7E-3 to 5.0E-3.]{
		\includegraphics[width=0.4\linewidth]{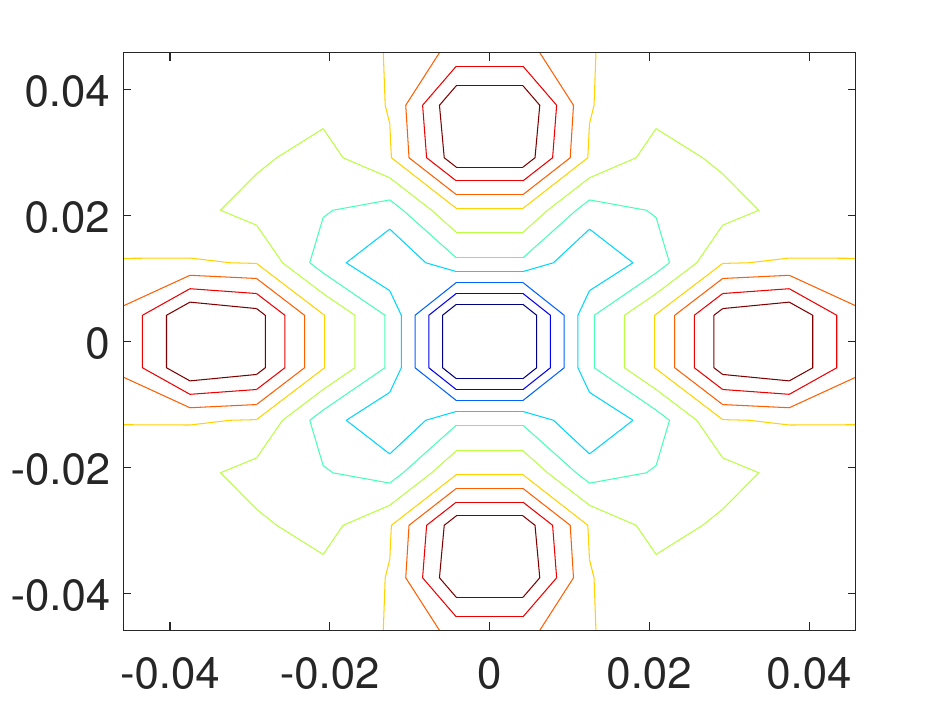}}
	\subfigure[WBESPP from 7.4E-4 to 7.8E-4.]{
		\includegraphics[width=0.4\linewidth]{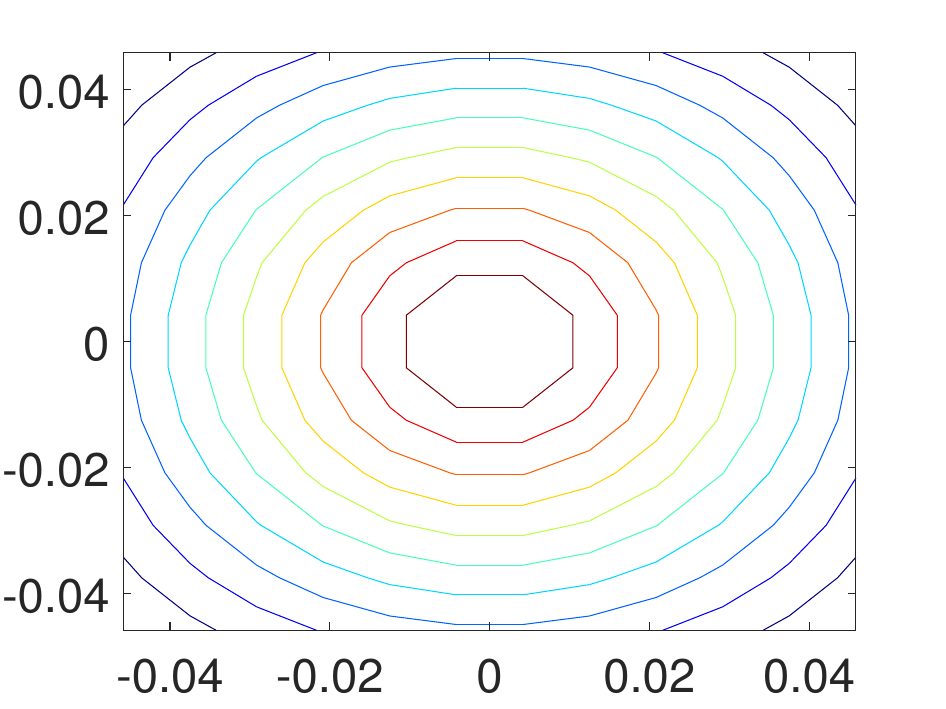}}
	\caption{Example 5.7: Two-dimensional radial Rayleigh-Taylor instability. The density perturbation at $T=5$ around central area on $N_x\times N_y=240\times 240$ meshes. 10 contour lines are used.}
   \label{figRT2}
\end{figure}

\textit{Example} 5.8 (Inertia-gravity wave problem.) Finally, we consider the inertia-gravity wave problem studied in \cite{giraldo2008study}, which is a benchmark test problem arising from atmospheric flows. This problem involves the evolution of a potential temperature perturbation in a channel which is of interest in the validation of numerical weather prediction schemes. The computational domain is a $\Omega = [0,300000]\times[0,10000]\,\mathrm m^2$ rectangle with periodic boundaries on the left and right edges, and reflective boundaries on the top and bottom edges. The gravity function of this problem is $\phi=g\,y, \, g=9.8\,\mathrm m/\mathrm s^2.$ The equilibrium solution is given by
$$ \rho^e=\frac{p_0}{R\Theta^e}\Pi^{\frac{1}{\gamma - 1}},\quad \mathbf u^e=\mathbf 0,\quad p^e=p_0\Pi ^{\frac{\gamma}{\gamma - 1}}, $$
where
$$\Theta^e=T_0\exp\left( \frac{\mathcal N^2}{g}y \right),\quad \Pi=1+\frac{(\gamma - 1)g^2}{\gamma RT_0\mathcal N}\left[\exp\left(-\frac{\mathcal N^2}{g}y\right)-1\right]. $$
The parameters are $ p_0=10^5\,\mathrm N/\mathrm m^2,\ T_0=300\,\mathrm K,\ \mathcal N=0.01/\mathrm s,\ R=287.058\,\,\mathrm J/\mathrm{kg}\,\mathrm K$. Then, a small perturbation is added to the potential  temperature $\Theta$:
$$ \Delta\Theta_0 = \theta_c\sin\left( \frac{\pi y}{h_c} \right)\left[ 1+\left(\frac{x-x_c}{a_c}\right)^2 \right]^{-1}, $$
where $\theta_c=0.01\,\mathrm K,\ h_c=10000\,\mathrm m,\ x_c=100000\,\mathrm m,\ a_c=5000\,\mathrm m$. The initial data for $\rho$ and $p$ are given by
$$ \rho = \frac{p_0}{R(\Theta^e+\Delta\Theta_0)}\Pi^{\frac{1}{\gamma - 1}},\quad p=p^e, $$
and the velocity is set to $u=20\,\mathrm m/\mathrm s,\ v=0$. In Fig. \ref{figIG}, we present the result of the potential temperature perturbation $\Delta\Theta$ at $T=3000\,\mathrm s$ on $N_x\times N_y=1200\times 40$ meshes. Compare to the results in \cite{giraldo2008study}, one can see that the evolution of potential temperature perturbation is resolved well by the proposed WBESPP scheme.

\begin{figure}[htbp!]
	\centering
    \subfigure[Overview.]{
		\includegraphics[width=0.85\linewidth]{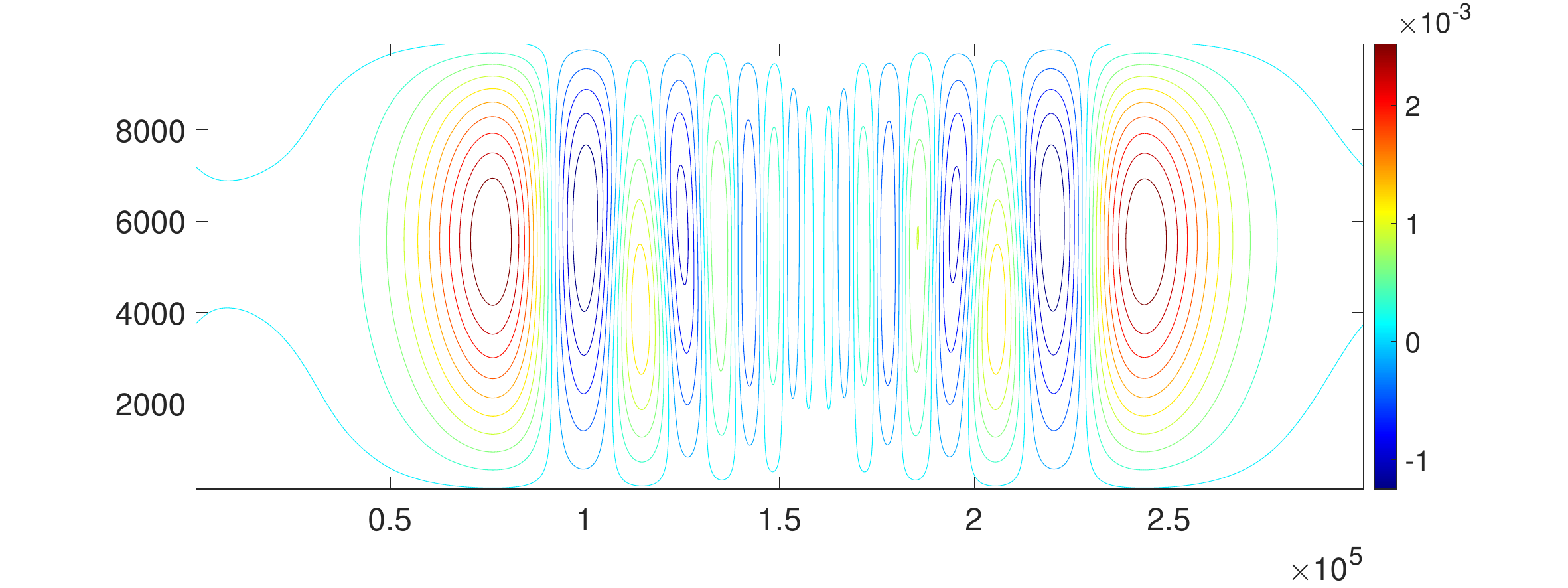}}
	\subfigure[Cut at $y=5000$.]{
		\includegraphics[width=0.85\linewidth]{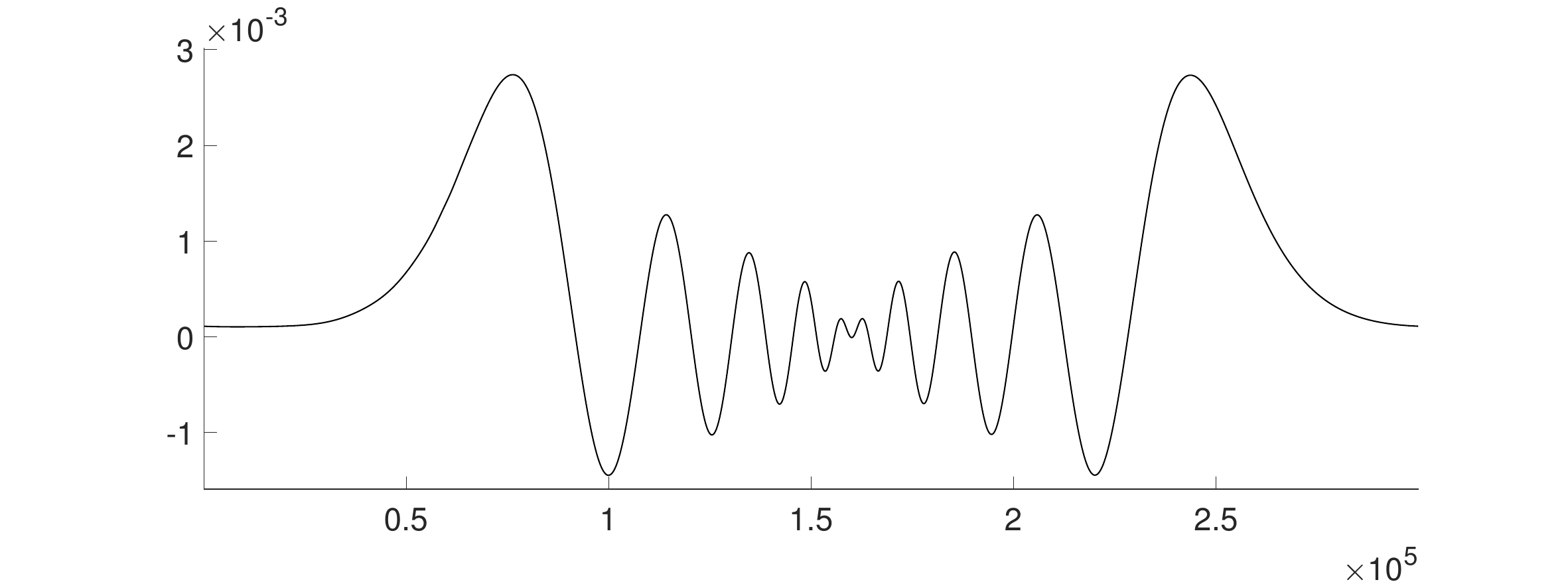}}
	\caption{Example 5.8: Inertia-gravity wave problem. The potential  temperature perturbation at $T=3000\mathrm s$ on $N_x\times N_y=1200\times 40$ meshes. 10 contour lines are used.}
   \label{figIG}
\end{figure}

\section{Concluding remarks}\label{sec6}

In this paper, we propose a structure preserving nodal DG scheme for solving the Euler equations with gravity which is well-balanced, entropy stable, and positivity-preserving. 
By rewriting the balance law and modifying the discretization of source term using an entropy conservative flux, we are able to achieve both the well-balancedness and entropy stability simultaneously. 
With a positivity preserving scaling limiter, we demonstrate that the fully-discrete scheme coupled with the forward Euler scheme ensures the positivity of density and pressure. Moreover, the limiter does not destroy the entropy stability and well-balancedness. 
 Theoretical analysis and numerical results confirm that our scheme maintains the numerical equilibrium state to machine accuracy, dissipates entropy, and preserves positive density and pressure even in extreme cases. 
Future works include the extension to unstructured meshes, and the application of other equations.

\bibliographystyle{abbrv}
\bibliography{tex}
\end{document}